\newtheorem{theorem}{Theorem}[section]
\newtheorem{lemma}[theorem]{Lemma}
\newtheorem{prop}[theorem]{Proposition}
\newtheorem{conjecture}[theorem]{Conjecture}
\theoremstyle{definition}
\newtheorem{remark}[theorem]{Remark}
\begin{document}

\title[Multiple zeta-functions]{On the behavior of multiple zeta-functions
with identical arguments on the real line I}

\author{Kohji Matsumoto}
\address{K. Matsumoto: Graduate School of Mathematics, Nagoya University, Chikusa-ku, Nagoya 464-8602, Japan}
\email{kohjimat@math.nagoya-u.ac.jp}

\author{Ilija Tanackov}
\address{I. Tanackov: Faculty of Technical Sciences, University of Novi Sad,
Trg Dositeja Obradovi{\'c}a 6, 21000 Novi Sad, Serbia}
\email{ilijat@uns.ac.rs}

\keywords{multiple zeta-function, real zeros, asymptotic behavior, Newton's identities}
\subjclass[2010]{Primary 11M32, Secondary 11B83, 11M35}
\thanks{
Research of the first author is
supported by Grants-in-Aid for Science Research no. 18H01111, JSPS, and that of the
second author is by Ministry of Science and Technological Development of Serbia 
no. TR 36012.}

\begin{abstract}
In the present series of papers,
we study the behavior of $r$-fold zeta-functions of Euler-Zagier type with identical
arguments $\zeta_r(s,s,\ldots,s)$ on the real line.    
In this first part, we consider the behavior on the interval $[0,1]$.
Our basic tool is an
``infinite'' version of Newton's classical identities.    We carry out numerical
computations, and draw graphs of $\zeta_r(s,s,\ldots,s)$ for real $s\in [0,1]$, for several
small values of $r$.    Those graphs suggest various properties of 
$\zeta_r(s,s,\ldots,s)$, some of which we prove rigorously.    For example, we show that
$\zeta_r(s,s,\ldots,s)$ has $r$
asymptotes at $\Re s=1/k$ ($1\leq k\leq r$), and
determine the asymptotic behavior of $\zeta_r(s,s,\ldots,s)$ close to those asymptotes.   
%We also prove asymptotic formulas for $\zeta_r(-k,-k,\ldots,-k)$, where $k$ takes odd
%positive integer values and tends to $+\infty$.
Until now, the existence of one real zero of $\zeta_2(s,s)$ has been known.
Our present computations establish several new real zeros between asymptotes
in the cases $3\leq r\leq 10$.
Moreover, on the number of real zeros of $\zeta_r(s,s,\ldots,s)$,
we raise a conjecture, and a formula for calculating the number of zeros on the interval
$[0,1]$ is derived.
\end{abstract}

\maketitle

%%%%%%%%%%%%%%%%%%%%%%%%%%%%%%%%%%%%%%%%%%%%%%%%%%%%%%%%%%%%%%%%%%%%%%%%%%%%%%%%%%%%%%%
\section{Introduction}
%%%%%%%%%%%%%%%%%%%%%%%%%%%%%%%%%%%%%%%%%%%%%%%%%%%%%%%%%%%%%%%%%%%%%%%%%%%%%%%%%%%%%%%%

The Euler-Zagier multiple zeta-function
\begin{align}\label{EZ_def}
\zeta_r (s_1,s_2,...,s_r)= \sum_{1 \leq m_1 <m_2<...<m_r } \frac{1}{m_1^{s_1}} \cdot\frac{1}{m_2^{s_2}} \cdots\frac{1}{m_r^{s_r}}, 
\end{align}
where $s_1,\ldots,s_r$ are complex variables, has been studied extensively in
recent decades.   The series \eqref{EZ_def} is convergent absolutely when $\Re s_j$
($1\leq j\leq r$) are sufficiently large, but it can be continued meromorphically to
the whole space $\mathbb{C}^r$ (see, for example, \cite{AET01}).
Analytic properties of $\zeta (s_1,s_2,...,s_r)$ have been studied in a lot of papers,
among which we mention here a numerical study on $\zeta_2(s_1,s_2)$ by the
first-named author and M. Sh{\=o}ji \cite{MatSho14} \cite{MatSho20}.    In \cite{MatSho14}
the case $s_1=s_2=s$ was treated, and the general two-variable case was discussed in
\cite{MatSho20}.
In particular, in \cite{MatSho14} it has been shown that the distribution of the zeros of
$\zeta_2(s,s)$ is not similar to that of the Riemann zeta-function $\zeta(s)=\zeta_1(s)$
(especially the Riemann hypothesis does not hold), but rather, has a resemblance to
the distribution of the zeros of Hurwitz zeta-functions
$\zeta(s,\alpha)=\sum_{m=0}^{\infty}(m+\alpha)^{-s}$ ($0<\alpha\leq 1$).   

It is desirable to generalize the study \cite{MatSho14} \cite{MatSho20} to the
general $r$-fold case.     It is natural to consider first the case when all variables
are identical: $s_1=\cdots=s_r=s$.    We write
$$
\zeta_r(s)=\zeta_r(s,s,\ldots,s).
$$
In \cite{MatSho14}, two topics were considered; the distribution of zeros of
$\zeta_2(s)$ in the complex plane $\mathbb{C}$, and the behavior of $\zeta_2(s)$ on the
real axis $\mathbb{R}$.    The aim of the present series of papers is to study the 
behavior of $\zeta_r(s)$ ($r\geq 2$) on $\mathbb{R}$.    
In this first part, we discuss the behavior of $\zeta_r(s)$ on the interval $[0,1]$.
The behavior outside this interval will be considered in the second part \cite{MMT}.
Moreover we have a plan of developing a study on $\zeta_r(s)$ for 
$s\in\mathbb{C}\setminus\mathbb{R}$ in the near future.

Our investigation is based on numerical computations on the behavior of
$\zeta_r(s)$ for $s\in\mathbb{R}$.   
We draw the graphs of $\zeta_r(s)$ for $2\leq r\leq 10$, which
suggest various properties of 
$\zeta_r(s,s,\ldots,s)$, some of which we prove rigorously.    For example, we show that
$\zeta_r(s,s,\ldots,s)$ has $r$
asymptotes at $\Re s=1/k$ ($1\leq k\leq r$), and
determine the asymptotic behavior of $\zeta_r(s,s,\ldots,s)$ close to those asymptotes.   
We also prove asymptotic formulas for $\zeta_r(-k,-k,\ldots,-k)$, where $k$ takes odd
positive integer values and tends to $+\infty$.

Moreover, numerical computations give some insight on the number of real zeros of 
$\zeta_r(s,s,\ldots,s)$.
The existence of one real zero of $\zeta_2(s,s)$ has already been reported in
\cite{MatSho14}.
Our present computations establish several new real zeros between asymptotes
in the cases $3\leq r\leq 10$.
Moreover, on the number of real zeros of $\zeta_r(s,s,\ldots,s)$,
we raise a conjecture, and a formula for calculating the number of zeros on the interval
$[0,1]$ is derived.

As we mentioned above, there is some similarity between the behavior of $\zeta_2(s)$ 
and that of Hurwitz zeta-functions.   Such similarity can also be expected for
$\zeta_r(s)$, $r\geq 3$.     Therefore the study on the real zeros of Hurwitz
zeta-functions can be suggestive in our research.

Recently there has been big progress on the study of real zeros of Hurwitz
zeta-functions (see Schipani \cite{Sch11}, Nakamura \cite{Nak16a} \cite{Nak16b},
Matsusaka \cite{Mat18}, and Endo and Suzuki \cite{EndSuz19}). 
It is an interesting problem to search for analogues of those studies in the case of
$\zeta_r(s)$.     It is to be mentioned that the idea included in those articles
was already applied by Nakamura himself \cite{Nak16b} to a variant of $\zeta_2(s)$ of
Hurwitz-Lerch type, and by Sakurai \cite{SakPre} to Barnes double zeta-functions.

%%%%%%%%%%%%%%%%%%%%%%%%%%%%%%%%%%%%%%%%%%%%%%%%%%%%%%%%%%%%%%%%%%%%%%%%%%%%%%%%%%%%%%
\section{Newton's identities}\label{sec-2}
%%%%%%%%%%%%%%%%%%%%%%%%%%%%%%%%%%%%%%%%%%%%%%%%%%%%%%%%%%%%%%%%%%%%%%%%%%%%%%%%%%%%%%

In this section we prepare the basic identities among multiple zeta-functions, based 
on the classical identities of
Newton, which we will use in our computations.    

Let $\mathbb{N}$ be the set of positive integers.
The polynomial of degree $n\in\mathbb{N}$, with roots $x_1,\ldots,x_n$ may be written as
\begin{align}
\prod_{m=1}^n(x-x_m)=\sum_{r=0}^n (-1)^r e_r x^{n-r},
\end{align}
where $e_r$ are symmetric polynomilas given by
\begin{align}
e_r=e_r(x_1,x_2,\ldots,x_n)=\sum_{1\leq m_1<m_2<\cdots<m_r\leq n}x_{m_1}x_{m_2}
\cdots x_{m_r}\qquad (1\leq r\leq n).
\end{align}
For example $e_1=x_1+x_2+\cdots+x_n$, $e_2=\sum_{1\leq i<j\leq n}x_i x_j$, etc., 
and we interpret that $e_0=1$.

Define the $r$-th power sum
$$
p_r=p_r(x_1,x_2,\ldots,x_n)=x_1^r+x_2^r+\cdots+x_n^r.
$$
Newton's identities are given by the following statement:
\begin{align}\label{Newton_id}
re_r(x_1,x_2,\ldots,x_n)=\sum_{j=1}^r (-1)^{j-1}e_{r-j}(x_1,x_2,\ldots,x_n)
p_j(x_1,x_2,\ldots,x_n),
\end{align}
where $r,n\in\mathbb{N}$ with $r\leq n$.
%For a polynomial of \(n^{th}\) degree (\ref{eq:2.1.}) with known values of the coefficients \(e_1, e_2,e_3,...,e_n\):
%\begin{equation}\label{eq:2.1.}\tag{2.1.}
% p(x)= x^n- e_1 x^{n-1}   +e_2 x^{n-2}  -e_3 x^{n-3}  +...+ (-1)^k  e_k x^{n-k}  +...+(-1)^n  e_n 
%\end{equation}
%\begin{equation}\label{eq:2.2.}\tag{2.2.}
%\begin{gathered}
%    -e_1=-p_1\quad p_1=e_1\\
%    +e_2= +\frac{1}{2}(p_1 e_1 - p_2)\quad p_2=-2e_2+ p_1 e_1\\
%    -e_3=-\frac{1 }{3}(p_1 e_2 - p_2e_1+p_3)\quad p_3=+3e_3-p_1 e_2+ p_2 e_1\\
%    (-1)^k e_k= \frac{(-1)^k}{k}( p_1 e_{k-1} - p_2e_{k-2}+...+(-1)^k p_k )\quad(-1)^k p_k=k(-1)^{k+1}e_k+(-1)^{k+1}p_1e_{k-1}+...
%\end{gathered}
%\end{equation}    
This is due to Sir Issac Newton.   Various proofs can be found in, for example,
%Although fully accepted over the centuries, Newton's identities "recently" received formal
%mathematical proof 
Zolberger \cite{Zol84}, Kalman \cite{Kal00}, and Mukherjee and Bera \cite{MukBer19}.
%[Zolberger,1984\footnote[1]{Doron Zolberger, A Combinatorial Proof of Newton's Identities, Discrete Mathematics, 49, 1984, 319}; Kalman, 2000\footnote[2]{Dan Kalman, A Matrix Proof of Newton's Identities, Mathematics Magazine, 73, 2000, 313-315};
%, Mukherjee and Bera, 2019\footnote[3] {Sajal Kumar Mukherjee, Sudip Bera, Combinatorial proofs of the Newton-Girard and Chapman-Costas-Santos identities,
%Discrete Mathematics, 342, 2019, 1577-1580};].
%The values of all Newton identities can be determined without known values of the roots \(z_1,z_2,z_3,...,z_n\) of the polynomial (2.1.). The values of \( p_1, p_2, p_3, ... , p_n\) are (\ref{eq:2.3.}):
%\begin{equation}\label{eq:2.3.}\tag{2.3.}
%    \begin{gathered}
%    p_1=z_1^1+z_2^1+z_3^1+...+z_n^1= \sum_{n=1}^{n}  z_n^1 \\
%    p_2=z_1^2+z_2^2+z_3^2+...+z_n^2= \sum_{n=1}^{n}  z_n^2\\
%    ...\\
%    p_k=z_1^k+z_2^k+z_3^k+...+z_n^k= \sum_{n=1}^{n}  z_n^k\\
%    ...
%    \end{gathered}
%\end{equation}

Now we let $x_m=m^{-s}$, where $s$ is a complex variable, and put
\begin{align}\label{N_r_def}
N_r(s)=e_r(1^{-s},2^{-s},\ldots,n^{-s})=\sum_{1\leq m_1<m_2<\cdots<m_r\leq n}
(m_1 m_2\cdots m_r)^{-s} \qquad (1\leq r\leq n).
\end{align}
We take the limit $n\to\infty$.    When $\Re s>1$, the limit of the right-hand side
converges, and is equal to $\zeta_r(s)$.    Therefore,
\begin{align}
\lim_{n\to\infty}N_r(s)=\zeta_r(s) \qquad (\Re s>1).
\end{align}
Also we see that
\begin{align}
\lim_{n\to\infty}p_j(1^{-s},2^{-s},\ldots,n^{-s})=\sum_{m=1}^{\infty}m^{-js}=\zeta(js),
\end{align}
where $\zeta(s)=\zeta_1(s)$ is the Riemann zeta-function.
Therefore, taking the limit $n\to\infty$ of Newton's identity \eqref{Newton_id}, 
we obtain
\begin{align}\label{zeta_id}
r\zeta_r(s)=\sum_{j=1}^r (-1)^{j-1}\zeta_{r-j}(s)\zeta(js) \qquad (r\in\mathbb{N}),
\end{align}
where we understand that $\zeta_0=1$.
This identity is first valid for $\Re s>1$, but then, by the meromorphic 
continuation, it can be extended to any $s\in\mathbb{C}$.

This is not a new identity.
In fact, this is essentially the well-known harmonic product formula, and Kamano \cite{Kam06}
deduced \eqref{zeta_id} (in a little more generalized form) from the harmonic product
formula.    However our argument is different.

For several small values of $r$, this formula implies:
\begin{align}\label{2_id}
\zeta_2(s)=\frac{1}{2}\left\{\zeta(s)^2-\zeta(2s)\right\},
\end{align}
\begin{align}\label{3_id}
\zeta_3(s)=\frac{1}{3}\left\{\zeta_2(s)\zeta(s)-\zeta(s)\zeta(2s)+\zeta(3s)\right\},
\end{align}
\begin{align}\label{4_id}
\zeta_4(s)=\frac{1}{4}\left\{\zeta_3(s)\zeta(s)-\zeta_2(s)\zeta(2s)+\zeta(s)\zeta(3s)
-\zeta(4s)\right\}.
\end{align}
From these formulas, it is also possible to get expressions of $\zeta_r(s)$ only in
terms of the Riemann zeta-function.   Substituting \eqref{2_id} into \eqref{3_id}, we
obtain
\begin{align}\label{3_riemann}
\zeta_3(s)=\frac{1}{6}\left\{\zeta(s)^3-3\zeta(s)\zeta(2s)+2\zeta(3s)\right\}.
\end{align}
Similarly,
\begin{align}\label{4_riemann}
\zeta_4(s)=\frac{1}{24}\left\{\zeta(s)^4-6\zeta(s)^2\zeta(2s)+3\zeta(2s)^2
+8\zeta(s)\zeta(3s)-6\zeta(4s)\right\}.
\end{align}

Consider a formal infinite polynomial 
\begin{align}\label{inf_poly}
 \prod_{m=1}^{ \infty } (x- x_m)= \lim_{n\to \infty}  \left\{( x-x_1)(x- x_2)\cdots
 (x- x_n )\right\}.
 \end{align}
When $x_m=m^{-s}$, the right-hand side is equal to
\begin{align}
\lim_{n\to \infty}\left\{x^n-x^{n-1} \sum_{m_1=1}^{n} \frac{{1} }{m_1^s}+x^{n-2}  \sum_{1 \leq m_1<m_2\leq n} \frac{{1} }{(m_1m_2)^s} -x^{n-3}\sum_{1 \leq m_1<m_2<m_3\leq n} \frac{{1} }{(m_1m_2m_3)^s}+...+\frac{{(-1)^n} }{{(n!)^s}} \right\},
\end{align}
whose each coefficient tends to $\zeta_r(s)$ as $n\to\infty$.
It should be noted that the above expansion is based on the rules of Vieta (F. Vi{\`e}te)
for the infinite polynomial \eqref{inf_poly}.
This observation shows that our fundamental formula \eqref{zeta_id} may be
formally regarded as Newton's identities for the infinite polynomial
\eqref{inf_poly}.

The above argument was inspired by the second-named author's
"new-nacci" method for solving polynomial roots.    This method is based on the convergence of successive
Fibonacci-type sequences, 
which are Newton's identities (see Tanackov et al.~\cite{Tan20}).
% [Tanackov et al.,2020\footnote[4]{Ilija Tanackov, Ivan Pavkov, {\v Z}eljko Stevi{\'c}, The New New-nacci Method for Calculating the Roots of a Univariate Polynomial and Solution
%of Quintic Equation in Radicals, Mathematics 2020, 8(5), 746 ; https://doi.org/10.3390/math8050746}].
%In the case of all real root values \(z_k\in R,\) \(k\in[1,n] \) and for the relations between the absolute values of
%the roots \(\left|z_1\right|\  \geq  \left|z_2\right|\ \geq ... \geq \left|z_n\right|\) 
%for \(m\to +\infty\)  sequences converge to root values (\ref{eq:2.7.}):

%The new-nacci method also gives the relation
%\begin{align}\label{plus_inf}
%\lim_{k\to\infty}\frac{\zeta_r(k+1)}{\zeta_r(k)}=\frac{1}{r!},
%\end{align}
%which is useful when we consider the behavior of
%$\zeta_r(s)$ when $s\to +\infty$.
%A proof of this relation will be given in \cite{MMT}.

%%%%%%%%%%%%%%%%%%%%%%%%%%%%%%%%%%%%%%%%%%%%%%%%%%%%%%%%%%%%%%%%%%%%%%%%%%%%%%%
\section{The double and the triple zeta-functions}\label{sec-3}
%%%%%%%%%%%%%%%%%%%%%%%%%%%%%%%%%%%%%%%%%%%%%%%%%%%%%%%%%%%%%%%%%%%%%%%%%%%%%%%

Before going into the discussion of general $r$-fold situation, in this section
we observe the behavior of $\zeta_r(s)$ on the real line for $r=2$ and 3.
Here we give the graphs not only on the interval $[0,1]$, but also outside this
interval.   

%%%%%%%%%%%%%%%%%%%%%%%%%%%%%%%%%
\subsection{The double case}
%%%%%%%%%%%%%%%%%%%%%%%%%%%%%%%%%%

The analytic properties of the double zeta-function are well studied (see Matsumoto \cite{Mat03} \cite{Mat04}, Kiuchi et al. \cite{KiuTan06} \cite{KTZ11}, Matsumoto and
Tsumura \cite{MatTsu15}, etc.).
The double zeta-function with identical arguments $\zeta_2(s)$, $s\in\mathbb{R}$,
can be computed by \eqref{2_id} (see Fig \ref{Fig1}).

\begin{figure}[h]
\centering
\includegraphics{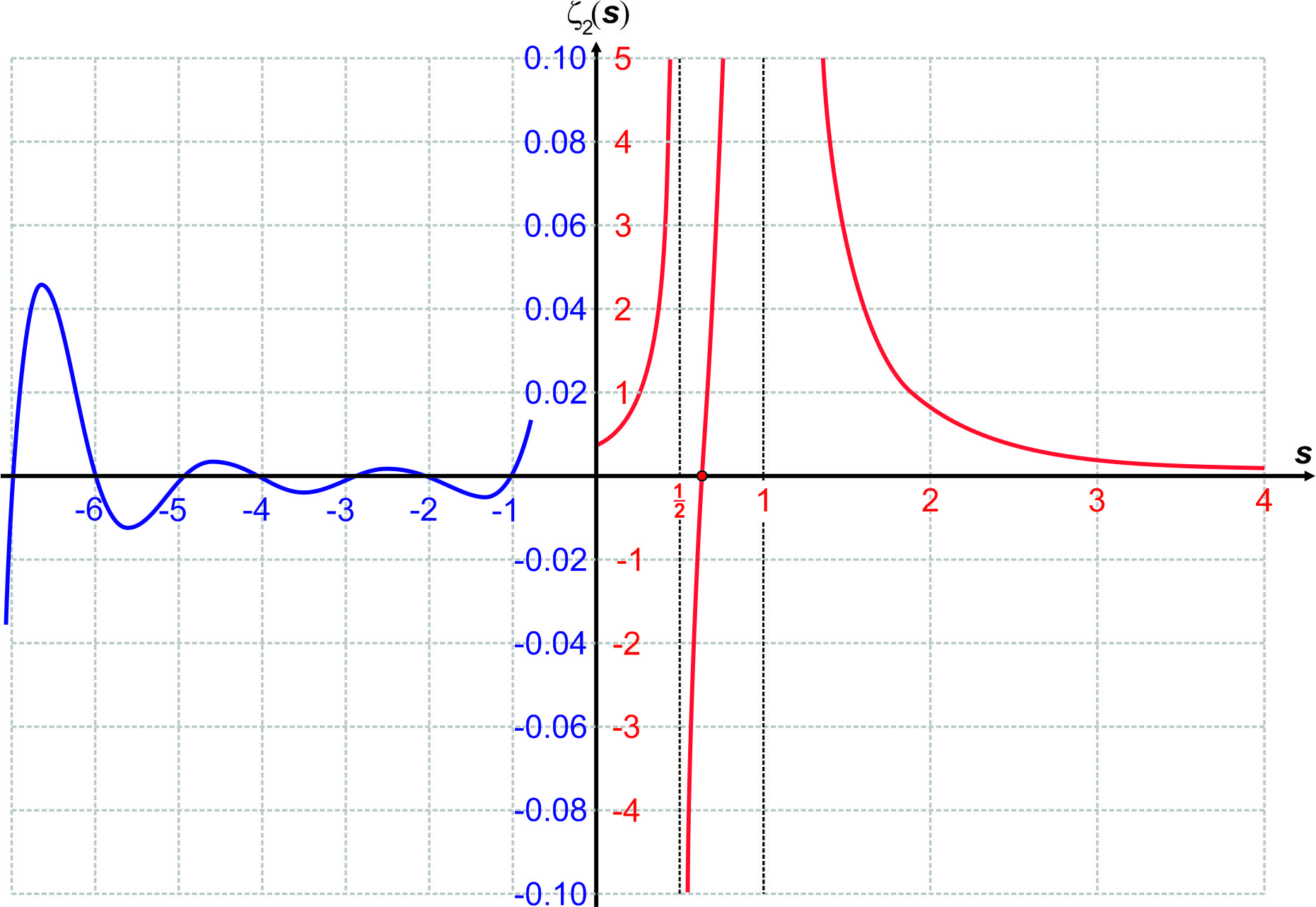}
\caption{The double zeta-function (Note that the vertical scale in the negative half-plane
is different from that in the positive half-plane)}
\label{Fig1}
\end{figure}

The graph of the double zeta-function has two vertical asymptotes $\Re s=1$ and
$\Re s=1/2$ in the positive half-plane.    The first one is caused by the factor
$\zeta(s)$, while the second one by $\zeta(2s)$, respectively.
One zero \(\zeta_2(0.6268175...)=0\) appears between these two vertical asymptotes.   We call this type of zero an ``inter-asymptotic'' zero, and write IAZ for brevity.

All the 
stated values, the graph, the location of zeros for $\zeta_2(s)$ mentioned above were analyzed in detail by the first-named author and Sh{\=o}ji \cite{MatSho14}.

%%%%%%%%%%%%%%%%%%%%%%%%%%%%%%%%%%%%%%%%%%%%%
\subsection{The triple case}
%%%%%%%%%%%%%%%%%%%%%%%%%%%%%%%%%%%%%%%%%%%%

Unlike the double zeta, the analytical properties of the triple zeta-function have been more modestly
investigated (see Kiuchi and Tanigawa \cite{KiuTan08}).
% 2008\footnote[5]{I. Kiuchi, Y. Tanigawa, Bounds for triple zeta functions, Indagationes Mathematicae, 19 (2008), 97-114}] 
The values of the triple zeta-function have been discussed for
different arguments of positive integer values (Markett \cite{Mar94}, 
Hoffman and Moen \cite{HofMoe96}, and Machide \cite{Mac13}).
%(Markett, 1994\footnote[6]{C. Markett, Triple Sums and the Riemann Zeta Function Journal of Number Theory, Volume 48, Issue 2, August 1994,Pages 113-132}; Hoffman and Moen, 1996\footnote[7]{Michael E. Hoffman, Courtney Moen, Sums of Triple Harmonic Series, Journal of Number Theory, 60, 1996, 329-331}]?.
%Newton's identity with three real roots is created by a combination of the third class from the set of
%real roots of an infinite polynomial (3.1.).
%For an identical "s" argument, we can declare the triple zeta function as a Newton's identity (\ref{eq:4.5.}):
%
%\begin{equation}\label{eq:4.5.}\tag{4.5.}
%    \zeta_3(s) =\sum_{1 \leq n_1<n_2<n_3}^{\infty} \left(\frac{1}{n_1n_2n_3}\right)^s
%\end{equation}
%Recurrent and non-recurrent form of triple zeta function is (\ref{eq:4.6.}):
%\begin{equation}\label{eq:4.6.}\tag{4.6.}
% \zeta_3(s)=\frac{1}{3}(\zeta_1(s)\zeta_2(s)-\zeta_1(2s)\zeta_1(s)+\zeta_1(3s))=\frac{\zeta_1^3(s)-3\zeta_1(s)\zeta_1(2s)+2\zeta_1(3s)}{6}   
%\end{equation}

%For example, $\zeta_3(-1)=139/51840$.
%Characteristic value \(\zeta_3(-1)\) is equal (\ref{eq:4.7.}) :
%\begin{equation}\label{eq:4.7.}\tag{4.7.}
%   \begin{split}
%    \zeta_3(-1)= \sum_{1 \leq <n_1<_2<n_3}^{\infty } n_1n_2n_3=1\cdot(2\cdot(3+4+...))+1\cdot(3\cdot(4+5+...))+1\cdot(4\cdot(5+6+...))...+\\
%    +2\cdot(3\cdot(4+5+...))+2\cdot(4\cdot(5+6+...))+...+3\cdot(4\cdot(5+6+...))+...=\frac{-\frac{1}{12^3}+\frac{2}{120}}{6}=\frac{139}{51840}  
%   \end{split}
%\end{equation}

\begin{figure}[h]
\centering
\includegraphics{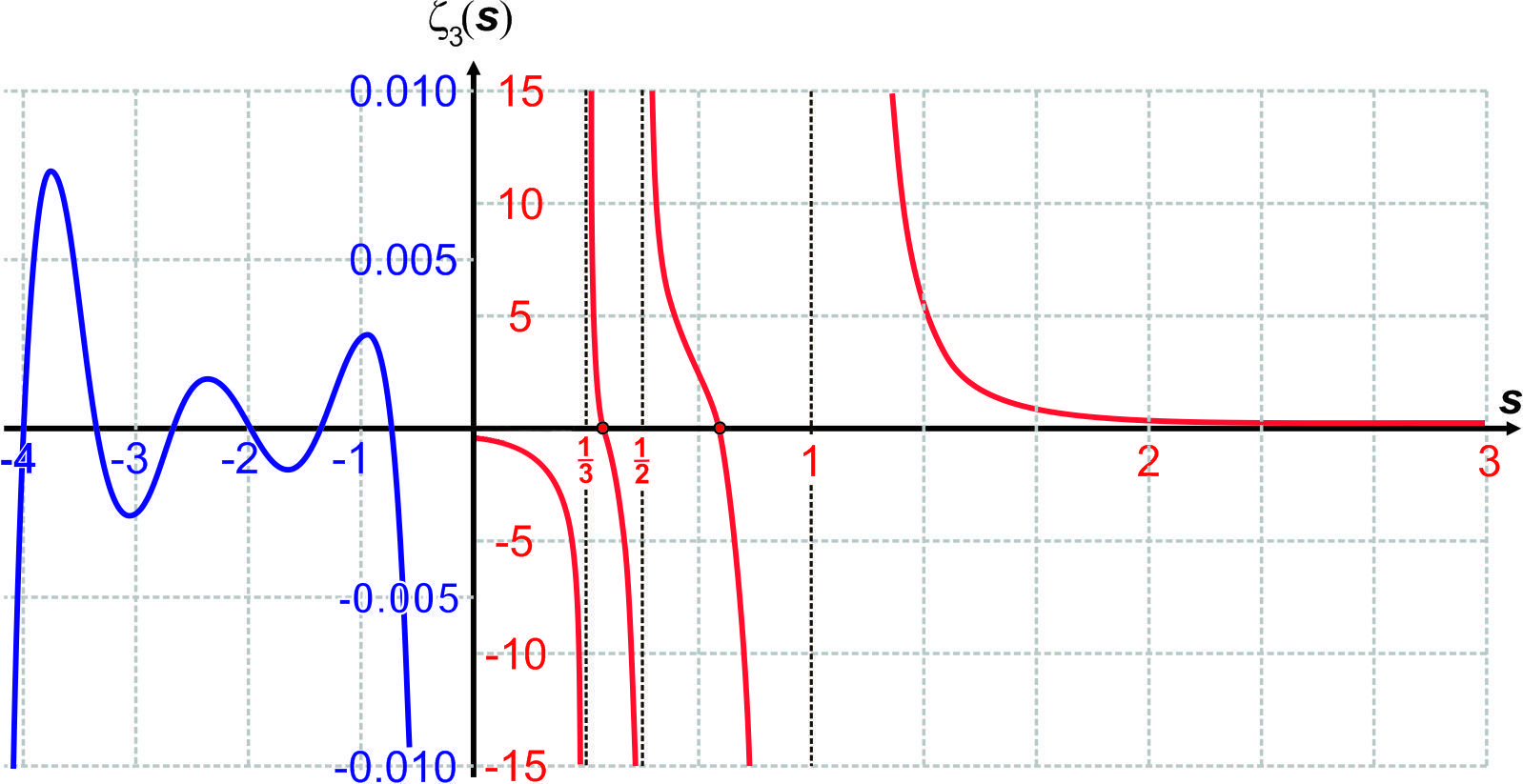}
\caption{The triple zeta-function (Note that the vertical scale in the negative half-plane
is different from that in the positive half-plane)}
\label{Fig3}
\end{figure}

%All trivial zeros of the Riemann zeta-function are zeros of $\zeta_3(s)$.    
%It seems from the graph that, on the interval between consecutive trivial zeros of the Riemann zeta-function, $\zeta_3(s)$ has two ITZs.
We compute $\zeta_3(s)$ for $s\in\mathbb{R}$ by using \eqref{3_id}, \eqref{3_riemann}
(see Fig \ref{Fig3}).
In the positive half-plane, the graph of the triple zeta-function has three vertical asymptotes: $\Re s=1, 1/2$ and $1/3$.
Vertical asymptotes $\Re s=1$ and $1/2$ are inherited from the double zeta-function,
while the new vertical asymptote $\Re s=1/3$ is coming from the factor $\zeta(3s)$ 
in \eqref{3_id}, \eqref{3_riemann}.
There are two IAZs between the vertical asymptotes \(\zeta_3(+0.385782)=0\) and
\(\zeta_3(+0.724902)=0\).
%The behavior of $\zeta_3(s)$ when $s\to +\infty$ can be explained by \eqref{plus_inf}.

%The application of the infinite polynomial (3.1.) and the convergence of Newton's identity defines one
%new significant feature of the triple zeta function. For a enough large \(k \in N\), each subsequent value
%\(\zeta_3(k+1)\) will be 3! time smaller than the previous one \(\zeta_3(k)\). From (3.7.) hold (4.8.):
%
%\begin{equation}\label{eq:4.8.}\tag{4.8.}
%  \frac{ \lim\limits_{n\to \infty}\frac{\zeta_3(k+1)}{\zeta_3(k)}}{ \frac{\lim\limits_{n\to\infty}\zeta_2(k+1)}{\lim\limits_{n\to \infty}\zeta_2(k)}}=\frac{1}{3}\Longleftrightarrow \lim\limits_{n\to \infty} \frac{\zeta_3(k+1)}{\zeta_3(k)}=\frac{1}{3} \frac{\lim\limits_{n\to \infty}\zeta_2(k+1)}{\lim\limits_{n\to \infty}\zeta_2(k)}=\frac{1}{2}\cdot\frac{1}{3}=\frac{1}{6})=\frac{1}{3!}
%\end{equation}

%%%%%%%%%%%%%%%%%%%%%%%%%%%%%%%%%%%%%%%%%%%%%%%%%%%%%%%%%%%%%%%%%%%%%%%%%%%%%%%%%%
%\section{Multiple zeta-functions on the interval $[0,1]$}
%%%%%%%%%%%%%%%%%%%%%%%%%%%%%%%%%%%%%%%%%%%%%%%%%%%%%%%%%%%%%%%%%%%%%%%%%%%%%%%%%%

%Now we proceed to the study of general $r$-fold multiple zeta-functions.
%We first investigate the behavior in the interval $[0,1]$, because this is
%^the most intriguing and dynamic interval.
 
%%%%%%%%%%%%%%%%%%%%%%%%%%%%%%%%%%%%%%%
\section{The cases $r=4$, 5 and 6}
%%%%%%%%%%%%%%%%%%%%%%%%%%%%%%%%%%%%%%%

Now we concentrate upon the behavior of multiple zeta-functions on the interval $[0,1]$.
First, in this section, we present the graphs of $\zeta_r(s)$, $s\in [0,1]$, for
$r=4,5$ and $6$.
 
The quadruple zeta-function was studied, for example, by Machide \cite{Mac19}.
The theory of the fourth power mean of the Riemann zeta-function
(Motohashi \cite{Mot93}, Ivi{\'c} and Motohashi \cite{IviMot95}) is somewhat relevant.

The quadruple zeta-function $\zeta_4(s)$ has four 
asymptotes: $\Re s=1, 1/2, 1/3$ and $1/4$, and
has four IAZs in the interval \( s\in [0,1]\) 
(see Fig \ref{Fig6}):

\begin{itemize}[noitemsep]
    \item One IAZ $\in (1/4,1/3)$:\;\(\zeta_4(0,27886...)\approx 0.\)
    
    \item One IAZ $\in (1/3,1/2)$:\;\(\zeta_4(0,387072...)\approx 0 .\)
    \item Two IAZs $\in (1/2,1)$: \(\zeta_4(0,571348...) \approx 0 \) and \(\zeta_4(0,783444...)\approx 0. \)

\end{itemize}

The quadruple zeta function \(\zeta_4(s)\) has one minimum \(\zeta_4(0,693658...) \approx -4,0699572... \) between vertical
asymptotes $1/2$ and 1.

There are two new features here:

(i) When $s\to 1/2$, the value $\zeta_4(s)$ tends to $+\infty$ for the both of the limits
$s\to 1/2+0$ and $s\to 1/2-0$.    The reason is that the pole of $\zeta_4(s)$ at $s=1/2$
is of order 2, because of the term $3\zeta(2s)^2$ in \eqref{4_riemann}.

(ii) In the interval $(1/2,1)$ there are two zeros.

%\begin{center}
% Figure 6: Quadruple zeta function for \( s\in [0,1]\)
%\end{center}

\begin{figure}[h]
\centering
\includegraphics{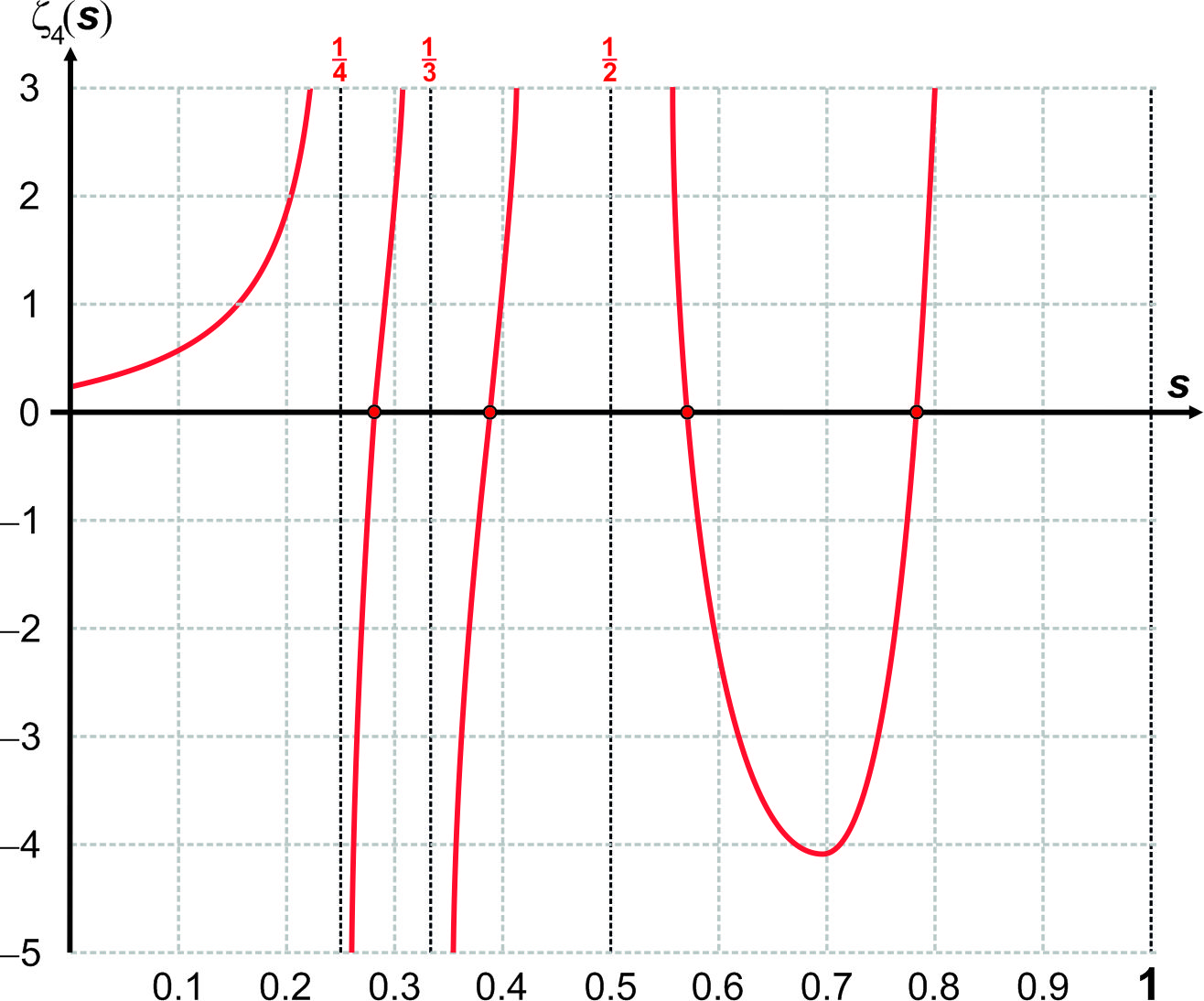}
\caption{The quadruple zeta-function for $s\in [0,1]$}
\label{Fig6}
\end{figure}

Next,
the five-fold zeta-function $\zeta_5(s)$ has five asymptotes: $\Re s=1, 1/2, 1/3, 1/4$ 
and $1/5$, and has five IAZs in \( s\in [0,1]\) (Fig \ref{Fig7}):

\begin{itemize}[noitemsep]
    \item One IAZ $\in (1/5,1/4)$: \(\zeta_5(0,218315...)\approx 0.\)
    
    \item One IAZ $\in (1/4,1/3)$: \(\zeta_5(0,278346...)\approx 0.\)
    \item One IAZ $\in (1/3,1/2)$: \(\zeta_5(0,423505...) \approx 0. \) 
    \item Two IAZs $\in (1/2,1)$: \(\zeta_5(0,643861...)\approx 0 \) and \(\zeta_5(0,881698...)\approx 0.\)
 \end{itemize}
Also \(\zeta_5(s)\) has one maximum \(\zeta_5(0,776027...)\approx  +6,003808... \) between vertical asymptotes $1/2$ and 1.
We see that $\zeta_5(s)\to -\infty$ as $s\to 1/2\pm0$.    

\begin{figure}[h]
\centering
\includegraphics{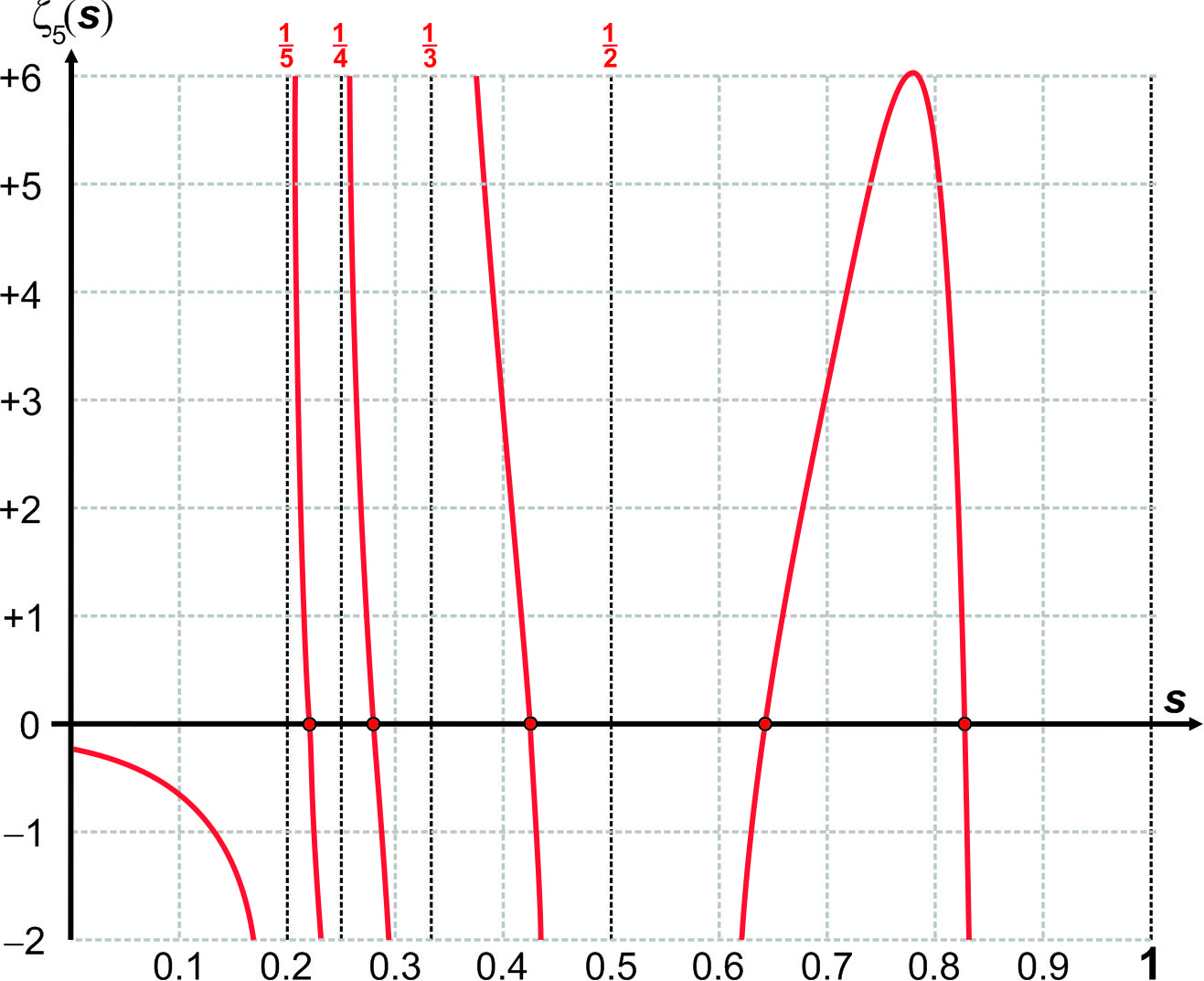}
\caption{The five-fold zeta-function for $s\in [0,1]$}
\label{Fig7}
\end{figure}

The six-fold zeta-function has six asymptotes: $\Re s=1, 1/2, 1/3, 1/4, 1/5$ 
and  $1/6$, and has eight IAZs in \( s\in [0,1]\) (Fig \ref{Fig8}): 

\begin{itemize}[noitemsep]
    \item One IAZ $\in (1/6,1/5)$: \(\zeta_6(0,179347...)\approx 0.\)
    
    \item One IAZ $\in (1/5,1/4)$: \(\zeta_6(0,217682...)\approx 0.\)
    \item  One IAZ $\in (1/4,1/3)$: \(\zeta_6(0,279817...) \approx0 \).
    \item Two IAZs $\in (1/3,1/2)$: \(\zeta_6(0,362716...)\approx 0 \) and \(\zeta_6(0,419205...)\approx 0. \)
    
    \item Three IAZs $\in (1/2,1)$: \(\zeta_6(0,549629...)\approx 0 \), \(\zeta_6(0,696745...)\approx 0 \) and
\(\zeta_6(0,848546...)\approx 0.\) 
\end{itemize}

The six-fold zeta-function \(\zeta_6(s)\) has two minimums and one maximum in \( s\in [0,1]\) :

\begin{itemize}[noitemsep]
    \item One minimum between vertical asymptotes $1/3$ and $1/2$: \(\zeta_6(0,386562...)\approx  -2,462682...\)
    \item One maximum \(\zeta_6(0,578067...)\approx  +5,283455...\)and one minimum \(\zeta_6(0,818945...)\approx  -10,900018...\)
between vertical asymptotes $1/2$ and 1.
\end{itemize}

There again appear two new features.    The case $r=6$ of \eqref{zeta_id} gives
\begin{align}
\zeta_6(s)=\frac{1}{6}\left\{\zeta_5(s)\zeta(s)-\zeta_4(s)\zeta(2s)+\zeta_3(s)\zeta(3s)
-\zeta_2(s)\zeta(4s)+\zeta(s)\zeta(5s)-\zeta(6s)\right\},
\end{align}
from which we can see that the orders of the poles of $\zeta_6(s)$ at $s=1/3$ and
$s=1/2$ are 2 and 3, respectively.   This gives the behavior of $\zeta_6(s)$ around
the asymptotes $1/3$ and $1/2$, indicated in the figure.
Moreover, the interval $(1/2,1)$ now includes three IAZ, and the interval $(1/3,1/2)$
also includes more than one IAZ.

\begin{figure}[h]
\centering
\includegraphics{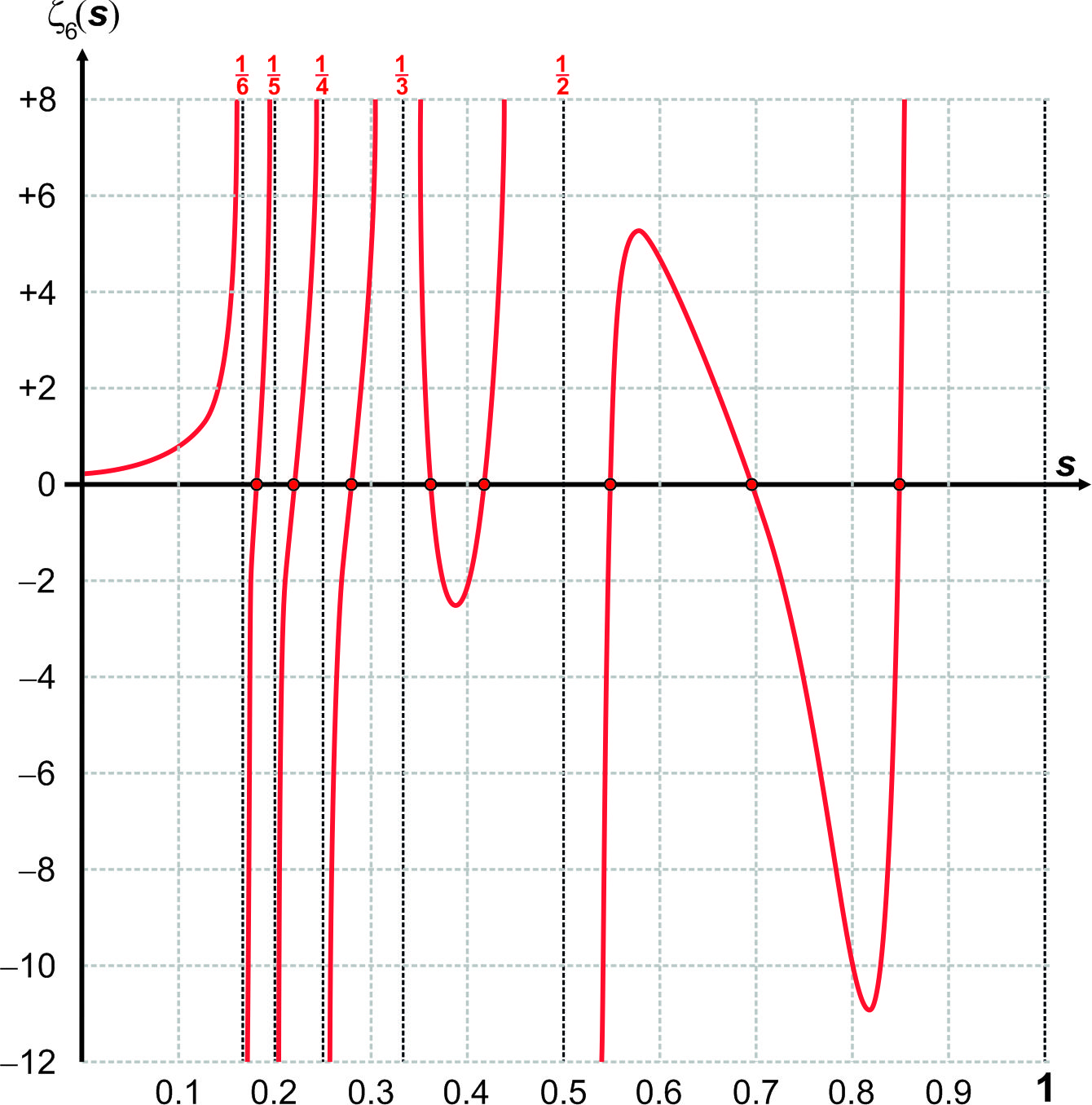}
\caption{The six-fold zeta-function for $s\in [0,1]$}
\label{Fig8}
\end{figure}
 
%%%%%%%%%%%%%%%%%%%%%%%%%%%%%%%%%%%%%%%%%%%
\section{Theorems and a conjecture}
%%%%%%%%%%%%%%%%%%%%%%%%%%%%%%%%%%%%%%%%%%%%
 
The above numerical data suggests several properties of $\zeta_r(s)$, some of which we will
prove here.   The first result is:

\begin{theorem}\label{Th<1/r}
On the interval $0\leq s< 1/r$, the value of $\zeta_r(s)$ is positive for even $r$,
and negative for odd $r$.    In particular, there is no zero of $\zeta_r(s)$ in
this interval.
\end{theorem}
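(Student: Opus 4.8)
The plan is to argue by induction on $r$, using the fundamental identity \eqref{zeta_id} together with elementary sign information about the Riemann zeta-function on $[0,1)$. The crucial observation is that, on the interval in question, every summand on the right-hand side of \eqref{zeta_id} carries the \emph{same} sign, so no cancellation can occur and the sign of $\zeta_r(s)$ is forced.

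First I would record the two sign facts that drive the argument. The key fact about the Riemann zeta-function is that $\zeta(\sigma)<0$ for all real $\sigma\in[0,1)$; this follows at once from the Dirichlet eta-function representation
\begin{align}
(1-2^{1-s})\zeta(s)=\sum_{n=1}^{\infty}\frac{(-1)^{n-1}}{n^{s}},
\end{align}
since for $\sigma\in(0,1)$ the right-hand side is a positive alternating sum while the factor $1-2^{1-\sigma}$ is negative, and $\zeta(0)=-1/2<0$. Now if $0\leq s<1/r$ and $1\leq j\leq r$, then $0\leq js<1$, so $\zeta(js)<0$ for every $j$ appearing in \eqref{zeta_id}. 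Moreover, since $1/(r-j)>1/r$ for $1\leq j\leq r-1$, the interval $[0,1/r)$ is contained in $[0,1/(r-j))$, so each lower function $\zeta_{r-j}(s)$ is evaluated strictly to the left of its first asymptote and is in particular finite and continuous there; this is exactly the range where the induction hypothesis applies.

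With these preliminaries the induction is immediate. The base case $r=1$ is the statement $\zeta_1(s)=\zeta(s)<0$ on $[0,1)$, and we take $\zeta_0\equiv 1>0$ for the trivial $j=r$ term. Assuming that the sign of $\zeta_{r-j}(s)$ equals $(-1)^{r-j}$ on $[0,1/r)$ for all $1\leq j\leq r$ (with $\zeta_0$ giving $(-1)^0=1$), the $j$-th summand of \eqref{zeta_id} has sign
\begin{align}
(-1)^{j-1}\cdot(-1)^{r-j}\cdot(-1)=(-1)^{(j-1)+(r-j)+1}=(-1)^{r},
\end{align}
the middle factor coming from $\zeta_{r-j}(s)$ and the last from $\zeta(js)<0$. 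Since this sign is independent of $j$, every term on the right of \eqref{zeta_id} has sign $(-1)^r$, hence so does $r\zeta_r(s)$, and therefore $\zeta_r(s)$ is positive for even $r$ and negative for odd $r$ throughout $[0,1/r)$. In particular $\zeta_r(s)$ never vanishes there.

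I expect the only genuinely delicate points to be administrative rather than structural: one must confirm that \eqref{zeta_id}, a priori valid for $\Re s>1$, does extend by meromorphic continuation to all of $[0,1/r)$ (as already noted after \eqref{zeta_id}), and that none of the functions $\zeta_r(s)$, $\zeta_{r-j}(s)$, $\zeta(js)$ has a pole in this half-open interval, so that the sign comparison is carried out between genuine finite real numbers. Both follow from the location of the asymptotes at $s=1/k$ ($1\leq k\leq r$). The conceptual heart of the proof is the sign-alignment identity $(-1)^{(j-1)+(r-j)+1}=(-1)^r$, which shows the summands reinforce rather than cancel; once this is noticed the result requires no estimates at all.
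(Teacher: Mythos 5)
Your proof is correct and is essentially identical to the paper's: the same induction on $r$ via \eqref{zeta_id}, with the same key observation that each summand has sign $(-1)^{(j-1)+(r-j)+1}=(-1)^r$ so that no cancellation occurs. The only cosmetic differences are that you fold the $j=r$ term into the sum via $\zeta_0\equiv 1$ (the paper separates it) and you supply a proof of $\zeta(\sigma)<0$ on $[0,1)$ where the paper simply cites it as well known.
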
    

\begin{proof}
When $r=1$, it is well-known that $\zeta(s)<0$ for $0\leq s<1$.   Then, from
\eqref{2_id} it is clear that $\zeta_2(s)>0$ for $0\leq s<1/2$.

In general, we prove the assertion by induction.
From \eqref{zeta_id} we have
\begin{align}\label{zeta_id2}
\zeta_r(s)=\frac{1}{r}\sum_{j=1}^{r-1}(-1)^{j-1}\zeta_{r-j}(s)\zeta(js)+
\frac{1}{r}(-1)^{r-1}\zeta(rs).
\end{align}
Let $0\leq s< 1/r$.    Then we have $\zeta(rs)<0$, hence the signature of the term 
$r^{-1}(-1)^{r-1}\zeta(rs)$ is given by $(-1)^r$.
Using the induction assumption, we have that $\zeta_{r-j}(s)$ is non-zero and its 
signature is
given by $(-1)^{r-j}$, so the signature of the term $(-1)^{j-1}\zeta_{r-j}(s)\zeta(js)$
is given by $(-1)^{(j-1)+(r-j)+1}=(-1)^r$.
Therefore all terms on the right-hand side of \eqref{zeta_id2} have the same signature
$(-1)^r$, and so the signature of $\zeta_r(s)$.
\end{proof}

Using Theorem \ref{Th<1/r}, we can prove the following theorem on
the asymptotic behavior of $\zeta_r(s)$ near the asymptotes.

\begin{theorem}\label{Th-asymp}
{\rm (i)} The function $\zeta_r(s)$ has poles only at $s=1/k$ $(1\leq k\leq r)$ of 
order $[r/k]$, where $[x]$ denotes the integer part of $x$.

{\rm (ii)} As $s\to 1/k$, the asymptotic behavior of $\zeta_r(s)$ is given by
\begin{align}\label{Th_formula}
\zeta_r(s)\sim C_r(k)(ks-1)^{-[r/k]},
\end{align}
where $C_r(k)$ is a non-zero real constant, whose singature coincides with the signature of
$(-1)^{r+[r/k]}$.    
\end{theorem}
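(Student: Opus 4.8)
The plan is to prove both parts simultaneously by induction on $r$, using the recursion \eqref{zeta_id2} as the engine. The key structural fact I would extract is that the identity
\begin{align}\label{plan_rec}
\zeta_r(s)=\frac{1}{r}\sum_{j=1}^{r-1}(-1)^{j-1}\zeta_{r-j}(s)\zeta(js)+
\frac{1}{r}(-1)^{r-1}\zeta(rs)
\end{align}
expresses $\zeta_r(s)$ as a finite sum of products $\zeta_{r-j}(s)\zeta(js)$, and the pole structure near a fixed $s=1/k$ is governed entirely by which of these factors blow up there. Since the only pole of the Riemann zeta-function is the simple pole at $s=1$, the factor $\zeta(js)$ contributes a simple pole at $s=1/k$ precisely when $j=k$, and $\zeta_{r-j}(s)$ contributes a pole of order $[(r-j)/k]$ at $s=1/k$ by the induction hypothesis. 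I would first establish the pole-order claim (i) by showing the maximal order among all terms equals $[r/k]$, then bootstrap to the sharper asymptotic (ii) by tracking the leading coefficient of the single dominant term.

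First I would set up the induction with base case $r=1$, where $\zeta_1(s)=\zeta(s)$ has a single simple pole at $s=1=1/1$ of order $[1/1]=1$, matching the formula, and with leading behavior $\zeta(s)\sim(s-1)^{-1}=(1\cdot s-1)^{-1}$ so $C_1(1)=1$, whose sign is $(-1)^{1+1}=+1$. For the inductive step at a fixed $k$ with $1\leq k\leq r$, I would examine each term on the right of \eqref{plan_rec}. The order of the pole of the product $\zeta_{r-j}(s)\zeta(js)$ at $s=1/k$ is $[(r-j)/k]$ plus $1$ if $k\mid j$ (i.e. $j\in\{k,2k,\ldots\}$) and plus $0$ otherwise. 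The arithmetic heart of the argument is the claim that $\max_{1\leq j\leq r}$ of this quantity equals $[r/k]$ and, crucially, that the maximum is \emph{attained by a unique term}, namely $j=k$, which gives order $[(r-k)/k]+1=[r/k]$. I would verify that every other term has strictly smaller order, so there is no cancellation at the top: the term $\tfrac1r(-1)^{r-1}\zeta(rs)$ only reaches order $[r/k]$ when $k=r$ (where it is the dominant simple-pole term and must be combined with $j=k=r$), and terms with $j>k$ but $k\nmid j$ lose the extra $+1$, while terms with $j$ a larger multiple of $k$ lose more than they gain.

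Once uniqueness of the dominant term is secured, part (i) follows immediately, and for part (ii) I would compute the leading coefficient. Near $s=1/k$ write $\zeta(ks)\sim (ks-1)^{-1}$, and by induction $\zeta_{r-k}(s)\sim C_{r-k}(k)(ks-1)^{-[(r-k)/k]}$, so the $j=k$ term contributes
\begin{align}\label{plan_lead}
\frac{1}{r}(-1)^{k-1}C_{r-k}(k)\,(ks-1)^{-[r/k]},
\end{align}
giving $C_r(k)=\tfrac1r(-1)^{k-1}C_{r-k}(k)$ (with the obvious modification $C_r(r)=\tfrac1r(-1)^{r-1}$ when $k=r$, using $\zeta_0=1$). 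The remaining task is the sign bookkeeping: I would check by induction that this recursion for $C_r(k)$ produces a sign equal to $(-1)^{r+[r/k]}$, using $[(r-k)/k]=[r/k]-1$ and the inductive sign $(-1)^{(r-k)+([r/k]-1)}$ for $C_{r-k}(k)$; the factors of $(-1)^{k-1}$ and $(-1)^{(r-k)+[r/k]-1}$ combine to the claimed $(-1)^{r+[r/k]}$. The main obstacle I anticipate is \emph{not} any single computation but rather justifying rigorously that the subdominant terms cannot conspire to raise the pole order or cancel the leading coefficient; this requires the clean uniqueness statement that among all $j$ the maximum of $[(r-j)/k]+\mathbf{1}[k\mid j]$ is achieved only at $j=k$, which is where Theorem \ref{Th<1/r} enters implicitly (guaranteeing the relevant $C_{r-k}(k)$ is genuinely nonzero so the leading term does not silently vanish).
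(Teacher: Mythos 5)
Your overall architecture (induction on $r$ via the recursion \eqref{zeta_id2}, base case $r=1$, separate treatment of $k=r$) matches the paper's, but there is a genuine gap at the arithmetic heart of your argument: the claim that the maximal pole order $[r/k]$ at $s=1/k$ is attained by a \emph{unique} term, namely $j=k$, is false whenever $k\nmid r$. Write $r=qk+\ell$ with $1\leq\ell\leq k-1$. Then for every $j$ with $1\leq j\leq\ell$ the factor $\zeta_{r-j}(s)$ already has a pole of order $[(r-j)/k]=q=[r/k]$ at $s=1/k$, while $\zeta(js)$ is regular and nonzero there (its value is $\zeta(j/k)$), so the term $\zeta_{r-j}(s)\zeta(js)$ contributes a pole of the \emph{same} top order $[r/k]$ as the $j=k$ term. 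Concretely, for $r=5$, $k=2$: in $5\zeta_5(s)=\zeta_4(s)\zeta(s)-\zeta_3(s)\zeta(2s)+\cdots$, both $\zeta_4(s)\zeta(s)$ (order $2+0$) and $\zeta_3(s)\zeta(2s)$ (order $1+1$) have double poles at $s=1/2$. Your resulting coefficient recursion $C_r(k)=\tfrac1r(-1)^{k-1}C_{r-k}(k)$ is therefore wrong: it would give $C_5(2)=\tfrac1{10}\zeta(1/2)$, whereas the correct value (Theorem \ref{Th-coeff}) is $\tfrac18\zeta(1/2)$, the difference being exactly the omitted $j=1$ contribution $\tfrac15\zeta(1/2)C_4(2)$. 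Because several terms share the top order, "no cancellation at the top" is precisely what must be \emph{proved}, and it affects part (i) as well: a priori the pole order could drop.

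The paper closes this gap by taking $C_r(k)$ to be the \emph{sum} of all top-order contributions, namely $\frac1r\{\sum_j^*(-1)^{j-1}\zeta(j/k)C_{r-j}(k)+(-1)^{k-1}D_{r-k}(k)\}$ with $\sum^*$ over $j\leq r-k$, $j\neq k$, $[(r-j)/k]=[r/k]$ (equivalently $1\leq j\leq\ell$), and then showing that every summand has the same signature $(-1)^{r+[r/k]}$ — using the inductive sign hypothesis for $C_{r-j}(k)$ together with $\zeta(j/k)<0$ (valid since $0<j/k<1$ for $j$ in $\sum^*$) — so that no cancellation can occur. This is also where Theorem \ref{Th<1/r} genuinely enters: when $r/2<k<r$ the factor $\zeta_{r-k}(s)$ is \emph{regular} at $s=1/k$ (since $1/k<1/(r-k)$), the quantity $C_{r-k}(k)$ in your recursion is not even defined, and what one needs is that the \emph{value} $\zeta_{r-k}(1/k)$ is nonzero with sign $(-1)^{r-k}$, which is exactly Theorem \ref{Th<1/r}; your proposal attributes that theorem's role to the nonvanishing of $C_{r-k}(k)$, which is instead part of the induction hypothesis. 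Your sign bookkeeping for the single $j=k$ term is correct as far as it goes, but to complete the proof you must extend it to all terms of $\sum^*$ and unify the two cases $k\leq r/2$ and $k>r/2$ as the paper does with $D_{r-k}(k)$.
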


\begin{proof}
When $r=1$, $\zeta_1(s)=\zeta(s)$ is the Riemann zeta-function, which has the only
pole at $s=1$ and this pole is simple with residue 1.    Therefore the theorem
for $r=1$ clearly holds. 

Now assume $r\geq 2$, and we prove the theorem by induction.
We use \eqref{zeta_id2} again.   On the right-hand side of \eqref{zeta_id2},
the factor $\zeta(js)$ has the only pole at $s=1/j$, while by induction assumption,
the poles of $\zeta_{r-j}(s)$ are at $s=1,1/2,\ldots,1/(r-j)$.
Therefore, for any fixed $k$ ($1\leq k\leq r$), the factors on the right-hand side of
\eqref{zeta_id2}, which are singular at $s=1/k$, are $\zeta(ks)$ and
$\zeta_{r-j}(s)$ with $j\leq r-k$.

Consider the case $k=r$.    The singularity at $s=1/r$ appears only in the last
term $r^{-1}(-1)^{r-1}\zeta(rs)$.    Therefore
\begin{align}
\zeta_r(s)\sim \frac{1}{r}(-1)^{r-1}\frac{1}{rs-1}
\end{align}
as $s \to 1/r$,
which implies \eqref{Th_formula} for $k=r$, with $C_r(r)=r^{-1}(-1)^{r-1}$.

Next, let $1\leq k\leq r-1$.
The asymptotic behavior of the singular factors at $s=1/k$ on the right-hand side of
\eqref{zeta_id2} can be written down by induction
assumption.    
For $j\leq r-k$ with $j\neq k$, we have
$$
\zeta_{r-j}(s)\zeta(js)\sim \zeta(j/k)C_{r-j}(k)(ks-1)^{-[(r-j)/k]}
$$ 
as $s\to 1/k$.
As for the term $\zeta_{r-k}(s)\zeta(ks)$, the factor $\zeta(ks)$ is always singular at 
$s=1/k$, while $\zeta_{r-k}(s)$ is singular only when $k\leq r-k$, that is $k\leq r/2$.
Therefore we may write
\begin{align}\label{zeta_asymp}
\zeta_r(s)\sim A(s)+B(s)
\end{align}
as $s\to 1/k$, where
\begin{align}\label{A_def}
A(s)=\frac{1}{r}\sum_{\stackrel{1\leq j\leq r-k}{j\neq k}}(-1)^{j-1}
\zeta\left(\frac{j}{k}\right)C_{r-j}(k)(ks-1)^{-[(r-j)/k]}
\end{align}
and
\begin{align}
B(s)=\left\{
\begin{array}{lll}
\displaystyle{\frac{1}{r}(-1)^{k-1}C_{r-k}(k)
(ks-1)^{-[(r-k)/k]}\frac{1}{ks-1}}  & {\rm if} & k\leq r/2,\\
\displaystyle{\frac{1}{r}(-1)^{k-1}\zeta_{r-k}\left(\frac{1}{k}\right)
\frac{1}{ks-1}}  & {\rm if}  &  k>r/2.
\end{array}
\right.
\end{align}
say.    Since
$(ks-1)^{-[(r-k)/k]-1}=(ks-1)^{-[r/k]}$, and if $k>r/2$ then $[r/k]=1$, we can
unify the above expression of $B(s)$ as
$$
B(s)
=\frac{1}{r}(-1)^{k-1}D_{r-k}(k)(ks-1)^{-[r/k]},
$$
where 
\begin{align}
D_{r-k}(k)=\left\{
\begin{array}{lll}
C_{r-k}(k) & {\rm if} & k\leq r/2,\\
\zeta_{r-k}(1/k) & {\rm if} &  k>r/2.
\end{array}
\right.
\end{align}
Since $[(r-j)/k]\leq [r/k]$ for any $j\geq 1$, the main contribution on the right-hand
side of \eqref{zeta_asymp} would be coming from $B(s)$, and the part of $A(s)$
consisting of only $j$ satisfying $[(r-j)/k]=[r/k]$.    That is, it would be that
\begin{align}\label{zeta_sim}
\zeta_r(s)\sim C_r(k)(ks-1)^{-[r/k]},
\end{align}
where
\begin{align}\label{C_r_k_def}
C_r(k)= \frac{1}{r}\left\{{\sum_j}^* (-1)^{j-1}\zeta\left(\frac{j}{k}\right)C_{r-j}(k)
+(-1)^{k-1}D_{r-k}(k)\right\},
\end{align}
and here, the symbol $\sum^*$ stands for the summation on $j$ satisfying
$j\leq r-k$, $j\neq k$ and $[(r-j)/k]=[r/k]$. 

To establish \eqref{zeta_sim} rigorously, it is necessary to check that
$C_r(k)\neq 0$.    This can be seen by observing the signature.    
By induction assumption, $C_{r-j}(k)$ is non-zero, and the signature of $C_{r-j}(k)$ coincides with the 
signature of $(-1)^{r-j+[(r-j)/k]}$.
Therefore the signature of $(-1)^{k-1}C_{r-k}(k)$ is 
$$(-1)^{k-1+(r-k+[(r-k)/k])}=(-1)^{r+[r/k]},$$
while the signature of $(-1)^{j-1}\zeta(j/k)C_{r-j}(k)$ is
$$
(-1)^{(j-1)+1+(r-j+[(r-j)/k]) }=(-1)^{r+[r/k]}
$$
for any $j$ which appears in the sum $\sum^*$
(because of the condition on $\sum^*$, and the fact $\zeta(j/k)<0$).
Moreover, if $k>r/2$, then $1/k<1/(r-k)$ so, by Theorem \ref{Th<1/r}, the signature 
of $(-1)^{k-1}\zeta_{r-k}(1/k)$ is $(-1)^{(k-1)+(r-k)}=(-1)^{r-1}$, and this is 
further equal to 
$(-1)^{r+[r/k]}$ because now $[r/k]=1$.   
It follows that the signatures of all terms on the right-hand side of 
\eqref{C_r_k_def} are the same.    Therefore obviously $C_r(k)\neq 0$, and its signature
coincides with $(-1)^{r+[r/k]}$.    The theorem is now proved.
\end{proof}

Moreover, we can determine the explicit values of the constants $C_r(k)$.

\begin{theorem}\label{Th-coeff}
We have
\begin{align}\label{Th_formula2}
C_r(r)=(-1)^{r-1}\frac{1}{r} \qquad (r\geq 1),
\end{align}
\begin{align}\label{Th_formula3}
C_r(k)=\frac{(-1)^{k-1}}{k}\zeta_{r-k}\left(\frac{1}{k}\right) \qquad 
(r/2< k\leq r-1),
\end{align}
\begin{align}\label{Th_formula4}
C_r(1)=\frac{1}{r!} \qquad (r\geq 1),
\end{align}
and, for any $k\geq 2$, 
\begin{align}\label{Th_formula5}
&C_{kr}(k)=\frac{(-1)^{(k-1)r}}{k^r\cdot r!} \qquad (r\geq 1),\\
&C_{kr+\ell}(k)=\frac{(-1)^{(k-1)r}}{k^r\cdot r!}\zeta_{\ell}\left(\frac{1}{k}\right)
\qquad (r\geq 1, 1\leq \ell\leq k-1). \label{Th_formula6}
\end{align}
\end{theorem}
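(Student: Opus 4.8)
The plan is to prove all of \eqref{Th_formula2}--\eqref{Th_formula6} simultaneously by establishing the single closed form
\begin{align*}
C_r(k)=\frac{1}{[r/k]!}\left(\frac{(-1)^{k-1}}{k}\right)^{[r/k]}\zeta_{r-k[r/k]}\left(\frac{1}{k}\right),
\end{align*}
with the convention $\zeta_0=1$, from which each displayed formula follows by specializing the pair $(r,k)$ and reading off the remainder $\ell=r-k[r/k]$. Indeed \eqref{Th_formula2} is the case $k=r$ (so $[r/k]=1$, $\ell=0$), \eqref{Th_formula3} the case $r/2<k\leq r-1$ (so $[r/k]=1$, $\ell=r-k$), \eqref{Th_formula4} the case $k=1$ (so $[r/k]=r$, $\ell=0$), while \eqref{Th_formula5} and \eqref{Th_formula6} are the cases $\ell=0$ and $1\leq\ell\leq k-1$.

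To obtain the closed form I would first upgrade the derivation of Section \ref{sec-2} to its generating-function shape. Taking $x_m=m^{-s}$ in $\prod_{m=1}^{n}(1+x_m t)=\sum_{r=0}^{n}e_r t^r$ and using $\log(1+x_m t)=\sum_{j\geq1}\frac{(-1)^{j-1}}{j}x_m^j t^j$ gives, after summing over $m$ and letting $n\to\infty$ exactly as for \eqref{zeta_id},
\begin{align}\label{genfun}
F(t,s):=\sum_{r=0}^{\infty}\zeta_r(s)\,t^r=\exp\left(\sum_{j=1}^{\infty}\frac{(-1)^{j-1}}{j}\zeta(js)\,t^j\right),
\end{align}
valid first for $\Re s>1$ and then, read coefficientwise, as an identity of formal power series in $t$ whose coefficients are meromorphic in $s$ (its $t^r$-coefficient is precisely \eqref{zeta_id}).

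Now fix $k$ and set $u=ks-1$ and $a=(-1)^{k-1}/k$. In the exponent of \eqref{genfun} only the $j=k$ term is singular at $s=1/k$, where $\zeta(ks)=u^{-1}+O(1)$, so I would split the exponent as $a t^k u^{-1}+E_{\mathrm{reg}}(t,s)$ with $E_{\mathrm{reg}}$ regular at $u=0$, and write $F=\exp(a t^k u^{-1})\exp(E_{\mathrm{reg}})$. Expanding $\exp(a t^k u^{-1})=\sum_{n\geq0}\frac{a^n}{n!}u^{-n}t^{kn}$ and extracting the $t^r$-coefficient shows that the most singular contribution, of order $u^{-[r/k]}$, comes from the unique index $n=[r/k]$, multiplied by the $t^{\ell}$-coefficient of $\exp(E_{\mathrm{reg}}(t,1/k))$ with $\ell=r-k[r/k]$. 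Since $\ell<k$, this last coefficient is unaffected by the $j\geq k$ part of the exponent (in particular by the $t^k$-term produced at $u=0$), so it equals the $t^\ell$-coefficient of $\exp\bigl(\sum_{j<k}\frac{(-1)^{j-1}}{j}\zeta(j/k)t^j\bigr)$, which by \eqref{genfun} is exactly $\zeta_\ell(1/k)$ (finite, as $1/k$ is not a pole of $\zeta_\ell$ for $\ell<k$). This gives the closed form.

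The main point to get right is the bookkeeping of the last step: confirming that $n=[r/k]$ is the unique index producing the top-order pole $u^{-[r/k]}$, and that the accompanying coefficient is genuinely $\zeta_\ell(1/k)$ rather than a truncation of it; this is where the inequality $\ell<k$ is essential, since it removes the singular $j=k$ term and all higher terms from the relevant part of the exponent. As a consistency check I would verify that the sign of the closed form, namely $(-1)^{(k-1)[r/k]+\ell}$ (using that $\zeta_\ell(1/k)$ has sign $(-1)^\ell$ by Theorem \ref{Th<1/r}), agrees with the signature $(-1)^{r+[r/k]}$ of Theorem \ref{Th-asymp}: the two exponents differ by $2[r/k]$ and hence have equal parity. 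An alternative route, avoiding \eqref{genfun}, is strong induction on $r$ directly from the recursion \eqref{C_r_k_def}, but that forces one to re-derive a Newton-type identity for $\zeta_\ell(1/k)$ by hand and is considerably more tedious.
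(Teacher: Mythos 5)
Your argument is correct, but it takes a genuinely different route from the paper's. The paper works entirely from the recursion \eqref{C_r_k_def} obtained inside the proof of Theorem \ref{Th-asymp}, and establishes \eqref{Th_formula2}--\eqref{Th_formula6} one at a time by a layered induction (on $r-k$ for \eqref{Th_formula3}, on $r$ and then on $\ell$ for \eqref{Th_formula5} and \eqref{Th_formula6}); at each step the sum over $j$ is collapsed by recognizing it, after substituting the induction hypothesis, as an instance of Newton's identity \eqref{zeta_id} evaluated at $s=1/k$. You instead repackage Newton's identities as the exponential generating function $\sum_{r\geq 0}\zeta_r(s)t^r=\exp\bigl(\sum_{j\geq 1}\frac{(-1)^{j-1}}{j}\zeta(js)t^j\bigr)$, isolate the unique singular term $j=k$ of the exponent near $s=1/k$, and read off the leading Laurent coefficient, arriving at the single closed form $C_r(k)=\frac{1}{[r/k]!}\bigl(\frac{(-1)^{k-1}}{k}\bigr)^{[r/k]}\zeta_{r-k[r/k]}(1/k)$ of which all five displayed formulas are specializations. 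This buys a unified statement in place of five, a transparent explanation of the mod-$k$ ``periodicity'' noted in the paper's remark, and an independent re-derivation of Theorem \ref{Th-asymp} itself (non-vanishing of $C_r(k)$ reduces to $\zeta_\ell(1/k)\neq 0$ for $\ell<k$, which is Theorem \ref{Th<1/r}); your two key bookkeeping points --- that $n=[r/k]$ is the unique index producing the pole of order $[r/k]$, and that $\ell<k$ guarantees the accompanying $t^\ell$-coefficient is exactly $\zeta_\ell(1/k)$ rather than a truncation --- are both identified and correct, as is the sign check. The price is that you must justify the generating-function identity coefficientwise after meromorphic continuation; this is routine (each $t^r$-coefficient of the exponential is a finite polynomial in $\zeta(s),\ldots,\zeta(rs)$ agreeing with $\zeta_r(s)$ for $\Re s>1$, hence everywhere), but note the small imprecision that the $t^r$-coefficient identity is the exponential-formula expression for $\zeta_r(s)$, which is equivalent to \eqref{zeta_id} (differentiate the logarithm in $t$) rather than literally identical to it.
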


\begin{remark}
For any fixed $k\geq 2$, formulas \eqref{Th_formula5} and \eqref{Th_formula6} show
that $C_r(k)$, as a function in $r$, has a kind of ``periodicity'' mod $k$.
\end{remark}

\begin{remark}
Formulas \eqref{Th_formula4}, \eqref{Th_formula5} and \eqref{Th_formula6} exhausts
all the cases of $C_r(k)$, $k,r\in\mathbb{N}$, $1\leq k\leq r$.     However we still
prefer to include \eqref{Th_formula2} and \eqref{Th_formula3} in the statement of
Theorem \ref{Th-coeff}, because they themselves are elegant formulas, and they are
necessary in the proof of \eqref{Th_formula5} and \eqref{Th_formula6}.
\end{remark}

\begin{proof}[Proof of Theorem \ref{Th-coeff}]
The first formula \eqref{Th_formula2} was already shown in the proof of Theorem
\ref{Th-asymp}. To prove the remaining formulas, we use \eqref{C_r_k_def}.

Let $r/2< k\leq r-1$.   Then $D_{r-k}(k)=\zeta_{r-k}(1/k)$, hence 
\eqref{C_r_k_def} is
\begin{align}\label{C_r_k_def2}
C_r(k)
=\frac{1}{r}\left\{{\sum_j}^* (-1)^{j-1}\zeta\left(\frac{j}{k}\right)C_{r-j}(k)
+(-1)^{k-1}\zeta_{r-k}\left(\frac{1}{k}\right)\right\},
\end{align}
where the summation $\sum^*$ runs over $1\leq j\leq r-k$.
First consider the case $k=r-1$.   Then
\begin{align}
C_r(r-1)=\frac{1}{r}\left\{\zeta\left(\frac{1}{r-1}\right)C_{r-1}(r-1)
+(-1)^{r-2}\zeta\left(\frac{1}{r-1}\right)\right\}.
\end{align}
Since $C_{r-1}(r-1)=(-1)^{r-2} (r-1)^{-1}$ by \eqref{Th_formula2}, the above is equal to
$$
\frac{(-1)^{r-2}}{r}\cdot \left(\frac{1}{r-1}+1\right)\zeta\left(\frac{1}{r-1}\right)
=\frac{(-1)^{r-2}}{r-1}\zeta\left(\frac{1}{r-1}\right),
$$
which is \eqref{Th_formula3} for $k=r-1$.

Now we show \eqref{Th_formula3} by induction on $r-k$. 
We may apply induction assumption to $C_{r-j}(k)$ on the right-hand side of
\eqref{C_r_k_def2} for $1\leq j\leq r-k-1$, because $(r-j)/2< k\leq (r-j)-1$ for
those $j$ and $(r-j)-k < r-k$.
Hence
we apply \eqref{Th_formula3} (for $1\leq j\leq r-k-1$) and \eqref{Th_formula2}
(for $j=r-k$) to the right-hand side of \eqref{C_r_k_def2} to get
\begin{align}
C_r(k)&=\frac{1}{r}\left\{\sum_{j=1}^{r-k-1}(-1)^{j-1}\zeta\left(\frac{j}{k}\right)
\frac{(-1)^{k+1}}{k}\zeta_{r-k-j}\left(\frac{1}{k}\right)
+(-1)^{r-k-1}\zeta\left(\frac{r-k}{k}\right)(-1)^{k-1}\frac{1}{k}\right\}\\
&\;+\frac{(-1)^{k-1}}{r}\zeta_{r-k}\left(\frac{1}{k}\right)\notag\\
&=\frac{(-1)^{k+1}}{rk}\sum_{j=1}^{r-k}(-1)^{j-1}
\zeta_{r-k-j}\left(\frac{1}{k}\right)\zeta\left(\frac{j}{k}\right)
+\frac{(-1)^{k-1}}{r}\zeta_{r-k}\left(\frac{1}{k}\right).\notag
\end{align}
Since the sum on the right-hand side is equal to $(r-k) \zeta_{r-k}(1/k)$ by \eqref{zeta_id}, 
we obtain
$$
C_r(k)=(-1)^{k+1}\left(\frac{r-k}{rk}+\frac{1}{r}\right)
\zeta_{r-k}\left(\frac{1}{k}\right)
=\frac{(-1)^{k+1}}{k}\zeta_{r-k}\left(\frac{1}{k}\right),
$$
which is \eqref{Th_formula3}.

Next consider $C_r(1)$.    The case $r=1$ is included in \eqref{Th_formula2}.
Assume $r\geq 2$.    Then \eqref{C_r_k_def} implies $C_r(1)=r^{-1}C_{r-1}(1)$, from
which \eqref{Th_formula4} immediately follows.

Lastly we prove \eqref{Th_formula5} and \eqref{Th_formula6}.
First we notice that the case $r=1$ of \eqref{Th_formula5} and \eqref{Th_formula6}
is included in \eqref{Th_formula2} and \eqref{Th_formula3}, respectively.
Therefore now assume $r\geq 2$, and prove the formulas by induction on $r$.

From \eqref{C_r_k_def} we have
\begin{align}\label{C_r_k_def3}
C_{kr+\ell}(k)= \frac{1}{kr+\ell}\left\{{\sum_j}^* (-1)^{j-1}\zeta\left(\frac{j}{k}\right)
C_{kr+\ell-j}(k)
+(-1)^{k-1}C_{kr+\ell-k}(k)\right\},
\end{align}
because $k\leq (kr+\ell)/2$.    The summation $\sum^*$ runs over all $1\leq j\leq \ell$
(because $[(kr+\ell-j)/k]=[(kr+\ell)/k]=r$ should be satisfied).
In particular, when $\ell=0$ this sum is empty, hence
\begin{align}
C_{kr}(k)=\frac{(-1)^{k-1}}{kr}C_{k(r-1)}(k).
\end{align}
Therefore recursively we obtain
$$
C_{kr}(k)=\frac{(-1)^{k-1}}{kr}\cdot \frac{(-1)^{k-1}}{k(r-1)}\cdots
\frac{(-1)^{k-1}}{2k}C_k(k)
=\frac{(-1)^{(k-1)(r-1)}}{k^{r-1}r!}\cdot \frac{(-1)^{k-1}}{k}
=\frac{(-1)^{(k-1)r}}{k^r\cdot r!},
$$
which is \eqref{Th_formula5}.

Assume $\ell\geq 1$.    Here we also adopt the induction on $\ell$.   Since 
$\ell-j<\ell$, we use the induction (on $\ell$) assumption to $C_{kr+\ell-j}(k)$ and
the induction (on $r$) assumption to $C_{kr+\ell-k}(k)$ on the right-hand side of
\eqref{C_r_k_def3}.    Then
\begin{align}
C_{kr+\ell}(k)=\frac{1}{kr+\ell}\left\{\sum_{j=1}^{\ell}(-1)^{j-1}\zeta\left(\frac{j}{k}\right)
\frac{(-1)^{(k-1)r}}{k^r\cdot r!}\zeta_{\ell-j}\left(\frac{1}{k}\right)
+(-1)^{k-1}
\frac{(-1)^{(k-1)(r-1)}}{k^{r-1}(r-1)!}\zeta_{\ell}\left(\frac{1}{k}\right)\right\}.
\end{align}
The sum on the right-hand side is
$$
\frac{(-1)^{(k-1)r}}{k^r\cdot r!}\cdot \ell\zeta_{\ell}\left(\frac{1}{k}\right)
$$
by \eqref{zeta_id}, hence
\begin{align}
C_{kr+\ell}(k)=\frac{1}{kr+\ell}\cdot \frac{(-1)^{(k-1)r}}{k^{r-1}(r-1)!}
\left(\frac{\ell}{kr}+1\right)\zeta_{\ell}\left(\frac{1}{k}\right)
=\frac{(-1)^{(k-1)r}}{k^r\cdot r!}\zeta_{\ell}\left(\frac{1}{k}\right),
\end{align}
which is \eqref{Th_formula6}.
\end{proof}

Now we consider the distribution of real zeros of $\zeta_r(s)$.   By Theorem
\ref{Th<1/r} we know that $\zeta_r(s)\neq 0$ for $0\leq s<1/r$.    However,
the graphs for $\zeta_r(s)$ for $2\leq r\leq 6$ show the existence of zeros
in other intervals.    Denote by $I_r(k)$ the number of IAZs of $\zeta_r(s)$
in the interval $(1/k,1/(k-1))$.    From the graphs we may observe that all zeros
in the graphs seem simple, and
\begin{align*}
& I_2(2)=1,\\
& I_3(3)=1,I_3(2)=1,\\
& I_4(4)=1, I_4(3)=1, I_4(2)=2,\\
& I_5(5)=1, I_5(4)=1, I_5(3)=1, I_5(2)=2,\\
& I_6(6)=1, I_6(5)=1, I_6(4)=1, I_6(3)=2, I_6(2)=3.
\end{align*}
In the next subsection we will present the graphs of $\zeta_r(s)$, $7\leq r\leq 10$.
From those graphs we further observe:
\begin{align*}
& I_7(7)=1, I_7(6)=1, I_7(5)=1, I_7(4)=1, I_7(3)=2, I_7(2)=3,\\
& I_8(8)=1, I_8(7)=1, I_8(6)=1, I_8(5)=1, I_8(4)=2, I_8(3)=2, I_8(2)=4,\\
& I_9(9)=1, I_9(8)=1, I_9(7)=1, I_9(6)=1, I_9(5)=1, I_9(4)=2, I_9(3)=3, I_9(2)=4,\\
& I_{10}(10)=1, I_{10}(9)=1, I_{10}(8)=1, I_{10}(7)=1, I_{10}(6)=1, I_{10}(5)=2, 
I_{10}(4)=2, I_{10}(3)=3, I_{10}(2)=5.
\end{align*}

 Based on these data, here we propose the following conjecture.
 
\begin{conjecture}\label{conj-IAZ}
For any $r\geq 2$, all IAZs of $\zeta_r(s)$ are simple, and $I_r(k)=[r/k]$
$(2\leq k\leq r)$.
\end{conjecture}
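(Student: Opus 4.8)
The plan is to analyze each interval $J_k=(1/k,1/(k-1))$ with $2\le k\le r$ in isolation. On the closure $\overline{J_k}$ the function $\zeta_r(s)$ is real-analytic apart from its two endpoint poles, because by Theorem \ref{Th-asymp} the poles of $\zeta_r$ lie at $s=1/j$ $(1\le j\le r)$, none of which lies strictly inside $J_k$ (that would force $k-1<j<k$). I would phrase the whole conjecture on $J_k$ as the conjunction of two matching bounds: a lower bound asserting that $\zeta_r$ has at least $[r/k]$ sign changes there, and an upper bound asserting that $\zeta_r$ has at most $[r/k]$ zeros counted with multiplicity. If both are proved, then writing $S$ for the number of sign changes and $Z$ for the number of zeros with multiplicity we have $[r/k]\le S\le Z\le[r/k]$, forcing $S=Z=[r/k]$; this simultaneously yields $I_r(k)=[r/k]$ and the simplicity of every IAZ.

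The first, and essentially complete, ingredient is a boundary sign analysis resting on Theorems \ref{Th-asymp} and \ref{Th-coeff}. By \eqref{Th_formula}, as $s\to 1/k^{+}$ the factor $ks-1\to 0^{+}$, so $\zeta_r(s)$ carries the sign of $C_r(k)$, that is $(-1)^{r+[r/k]}$. As $s\to 1/(k-1)^{-}$ we have $(k-1)s-1\to 0^{-}$, and with $(k-1)s-1=-\delta$, $\delta\to 0^{+}$,
\begin{align}
\zeta_r(s)\sim C_r(k-1)(-1)^{[r/(k-1)]}\delta^{-[r/(k-1)]},
\end{align}
so the right-endpoint sign is $(-1)^{[r/(k-1)]}(-1)^{r+[r/(k-1)]}=(-1)^{r}$. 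The product of the two boundary signs is therefore $(-1)^{[r/k]}$, so by the intermediate value theorem the number of IAZs in $J_k$ is congruent to $[r/k]$ modulo $2$, and is at least one whenever $[r/k]$ is odd. This already proves the parity assertion $I_r(k)\equiv[r/k]\pmod 2$ and the existence of an IAZ in every interval with $[r/k]$ odd, in particular in every $J_k$ with $k>r/2$.

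The main obstacle is to upgrade this parity statement to the exact lower bound $S\ge[r/k]$ and, above all, to prove the upper bound $Z\le[r/k]$. For the lower bound when $[r/k]\ge 2$ the two endpoint signs alone do not suffice, since one must exhibit $[r/k]$ genuine alternations; I would try to produce the intermediate sign changes from the full principal part of the Laurent expansion at $s=1/k$ together with a small number of explicit interior evaluations, but locating them rigorously across the whole interval is delicate. For the upper bound I see two natural attacks. One is a Rolle-type argument applied to the pole-cleared function $F_k(s)=(ks-1)^{[r/k]}\bigl(1-(k-1)s\bigr)^{[r/(k-1)]}\zeta_r(s)$, which is real-analytic on $\overline{J_k}$ and has the same zeros as $\zeta_r$ inside $J_k$: here one would bound the zeros of $F_k$ by induction through \eqref{zeta_id2}, controlling the zeros of $F_k'$. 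The other is an argument-principle estimate bounding the real zeros in $J_k$ by the zeros in a thin complex neighbourhood, using the known growth near the asymptotes.

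I expect the upper bound to be the decisive difficulty. The recursion \eqref{zeta_id2} writes $\zeta_r$ as an alternating sum of products $\zeta_{r-j}(s)\zeta(js)$ whose factors have mutually incompatible pole orders on $J_k$, so the zeros of the summands neither interlace nor obey a clean Descartes-type sign-variation bound, and the subleading Laurent data at each asymptote must be controlled uniformly over the entire interval rather than merely locally. It is precisely this lack of a monotone or interlacing structure that, at present, leaves the sharp count $I_r(k)=[r/k]$ and the simplicity of the IAZs out of reach, and explains why the statement is recorded here only as a conjecture.
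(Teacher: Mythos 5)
The statement you were asked about is recorded in the paper as Conjecture \ref{conj-IAZ}, and the authors state explicitly that they have no proof of it; so there is no ``paper proof'' to match your attempt against. Your proposal, to its credit, does not pretend to close the conjecture either: it proves a partial result and then names the obstruction. The partial result is correct and is genuinely more than the paper establishes. Combining Theorem \ref{Th-asymp}(ii) with the sign of $C_r(k)$ (namely $(-1)^{r+[r/k]}$) does give boundary signs $(-1)^{r+[r/k]}$ at $s\to 1/k^{+}$ and $(-1)^{r}$ at $s\to 1/(k-1)^{-}$, whose product is $(-1)^{[r/k]}$; since Theorem \ref{Th-asymp}(i) guarantees $\zeta_r$ is continuous on the open interval $(1/k,1/(k-1))$, the intermediate value theorem yields at least one IAZ whenever $[r/k]$ is odd, in particular for every $k>r/2$. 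One small correction: the parity conclusion should be stated for the number of zeros counted \emph{with multiplicity} (a zero of even multiplicity produces no sign change), so what you get is $I_r(k)\equiv[r/k]\pmod 2$ only under the simplicity hypothesis you are trying to prove, or else for the multiplicity count.

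The genuine gap is exactly where you locate it: neither the sharp lower bound ($[r/k]$ sign alternations when $[r/k]\ge 2$) nor any upper bound on the number of zeros in $(1/k,1/(k-1))$ follows from the local Laurent data at the two endpoints, and the recursion \eqref{zeta_id2} does not transmit zero counts inductively because the summands $\zeta_{r-j}(s)\zeta(js)$ have no interlacing or sign-variation structure on the interval. Your proposed Rolle-type argument on the pole-cleared function $F_k$ and the argument-principle variant are both plausible programs, but as written neither is carried out, so the proposal is a correct reduction plus a correct partial result, not a proof. Since the paper itself offers only numerical evidence, your boundary-sign analysis is a worthwhile unconditional addition, but the conjecture remains open on both sides.
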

 
The above data give a strong evidence for this conjecture, but so far we have
not found any rigorous proof of the conjecture. 
 
Let $IAZ(r)$ be the total number of IAZs, that is, the number of all real zeros
of $\zeta_r(s)$ in the interval $(0,1)$.

\begin{theorem}
If Conjecture \ref{conj-IAZ} is true, then we have
\begin{align}
IAZ(r)=\sum_{k=2}^r \left[\frac{r}{k}\right]=r\log r-2(1-\gamma)r+O(r^{1/2}),
\end{align}
where $\gamma=0.577215\ldots$ is Euler's constant.
\end{theorem}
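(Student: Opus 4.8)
The first equality is nothing but the definition of $IAZ(r)$ combined with Conjecture \ref{conj-IAZ}: assuming the conjecture, each interval $(1/k,1/(k-1))$ with $2\leq k\leq r$ contributes exactly $[r/k]$ inter-asymptotic zeros, and summing over $k$ gives $IAZ(r)=\sum_{k=2}^r[r/k]$. All the substance therefore lies in the asymptotic evaluation of this arithmetic sum, and the plan is to recognize it as a shifted divisor summatory function and then apply Dirichlet's classical divisor theorem.

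First I would restore the missing $k=1$ term, writing
\[
\sum_{k=2}^r\left[\frac{r}{k}\right]=\sum_{k=1}^r\left[\frac{r}{k}\right]-\left[\frac{r}{1}\right]=\sum_{k=1}^r\left[\frac{r}{k}\right]-r.
\]
The key observation is the standard identity $\sum_{k=1}^r[r/k]=\sum_{n\leq r}d(n)=:D(r)$, where $d(n)$ denotes the number of divisors of $n$. This follows by counting lattice points under a hyperbola: $[r/k]$ equals the number of integers $m\geq 1$ with $km\leq r$, so $\sum_{k=1}^r[r/k]$ counts all pairs $(k,m)$ of positive integers with $km\leq r$; grouping these pairs according to the value of their product $km=n$ yields $\sum_{n\leq r}d(n)$.

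Next I would invoke Dirichlet's divisor theorem in the form
\[
D(r)=r\log r+(2\gamma-1)r+O(r^{1/2}),
\]
which is classical. Substituting into the previous display gives
\[
\sum_{k=2}^r\left[\frac{r}{k}\right]=r\log r+(2\gamma-1)r-r+O(r^{1/2})=r\log r-2(1-\gamma)r+O(r^{1/2}),
\]
since $2\gamma-2=-2(1-\gamma)$. This is exactly the asserted formula.

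There is essentially no analytic obstacle: once the sum is identified with $D(r)-r$, the result is an immediate consequence of Dirichlet's theorem, and the error term $O(r^{1/2})$ is precisely the one Dirichlet provides. The only point demanding care is the combinatorial identity $\sum_{k=1}^r[r/k]=D(r)$, which I would justify explicitly via the hyperbola lattice-point argument so that no off-by-one slip (such as mishandling the isolated $k=1$ term) creeps in. Sharper error terms are of course available from the extensive literature on the Dirichlet divisor problem, but since the theorem claims only $O(r^{1/2})$, the elementary version of Dirichlet's estimate suffices and no appeal to the deeper theory is needed.
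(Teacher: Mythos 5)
Your argument is correct and coincides with the paper's own proof: both restore the $k=1$ term, identify $\sum_{k=1}^r[r/k]$ with the divisor summatory function $\sum_{\ell\leq r}d(\ell)$ by counting divisor pairs, and conclude via Dirichlet's estimate $\sum_{\ell\leq r}d(\ell)=r\log r+(2\gamma-1)r+O(r^{1/2})$. No differences worth noting.
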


\begin{proof}
The first equality is a direct consequence of the conjecture.    The second
equality follows from
\begin{align}\label{F-d}
\sum_{k=2}^r \left[\frac{r}{k}\right]=
\sum_{k=1}^r \left[\frac{r}{k}\right]-r
=\sum_{k=1}^r \sum_{\substack{\ell\leq r \\ \ell\equiv 0 ({\rm mod}\; k)}}1-r
=\sum_{\ell\leq r}d(\ell)-r,
\end{align}
where $d(\ell)$ denotes the number of positive divisors of $\ell$, and the known result
$$
\sum_{\ell=1}^r d(\ell)=r\log r+(2\gamma-1)r+O(r^{1/2})
$$
(\cite[Theorem 3.3]{Apos76}).
\end{proof}

\begin{remark}
The above error term $O(r^{1/2})$ is not best-possible.    It is conjectured that
the estimate $O(r^{1/4+\varepsilon})$ would hold for any $\varepsilon>0$, 
and the best known result is $O(r^{131/416+\varepsilon})$ due to
Huxley \cite{Hux03}.
\end{remark}

\begin{remark}
Write $F(r)=\sum_{k=2}^r [r/k]$, and let
$$
\Delta_{IAZ}(r)=IAZ(r)-IAZ(r-1), \qquad \Delta_F(r)=F(r)-F(r-1).
$$
If Conjecture \ref{conj-IAZ} is true, then $\Delta_{IAZ}(r)=\Delta_F(r)$, so it is
interesting to observe the behavior of $\Delta_F(r)$.
From \eqref{F-d} we see that
$\Delta_F(r)=d(r)-1$.    In particular, 
$\Delta_F(p)=1$ for any prime number $p$.
Further, $\Delta_F(r)$ is dominantly odd; it is even if and only if $r$ is a
square (because if the decomposition of $r$ into prime factors is
$r=p_1^{a_1}p_2^{a_2}\cdots p_k^{a_k}$, then $d(r)=(a_1+1)(a_2+1)\cdots (a_r+1)$).
\end{remark}

%%%%%%%%%%%%%%%%%%%%%%%%%%%%%%%%%%%%%%%%%%%%%%%
\section{Further examples}
%%%%%%%%%%%%%%%%%%%%%%%%%%%%%%%%%%%%%%%%%%%%%%% 

Here we present the graphs of $\zeta_r(s)$, $7\leq r\leq 10$, $s\in [0,1]$.
We find that the behavior of those graphs agrees with our Conjecture \ref{conj-IAZ}.
 
 The seven-fold zeta-function $\zeta_7(s)$ has seven asymptotes: $\Re s=1, 1/2, 1/3, 1/4, 1/5, 1/6$ and $1/7$, and has nine IAZs for \( s\in [0,1]\) (Fig \ref{Fig9}):

\begin{itemize}[noitemsep]
    \item One IAZ $\in (1/7,1/6)$: \(\zeta_7(0,152170...)\approx 0.\)
    
    \item One IAZ $\in (1/6,1/5)$: \(\zeta_7(0,178811...)\approx 0. \)
    \item One IAZs $\in (1/5,1/4)$: \(\zeta_7(0,217987...) \approx0 \).
    \item One IAZs $\in (1/4,1/3)$: \(\zeta_7(0,298653...) \approx0 \).
    \item Two IAZs $\in (1/3,1/2)$: \(\zeta_7(0,365596..)\approx 0 \) and \(\zeta_7(0,442820...)\approx 0 \).
    
    \item Three IAZs $\in (1/2,1)$: \(\zeta_7(0,605776...)\approx 0 \), \(\zeta_7(0,736271...)\approx 0 \) and
\(\zeta_7(0,868958...)\approx 0.\) 
\end{itemize} 

Also \(\zeta_7(s)\) has one minimum and two maximums for \( s\in [0,1]\):

\begin{itemize}[noitemsep]
    \item One maximum between vertical asymptotes $1/3$ and $1/2$: \(\zeta_7(0,412055...)\approx  +4,875899...\)
    \item One minimum \(\zeta_7(0,663498...)\approx  -1,927124...\) and one maximum \(\zeta_7(0,847083...)\approx  +21,72816...\)
between vertical asymptotes $1/2$ and 1.
\end{itemize}

\begin{figure}[h]
\centering
\includegraphics{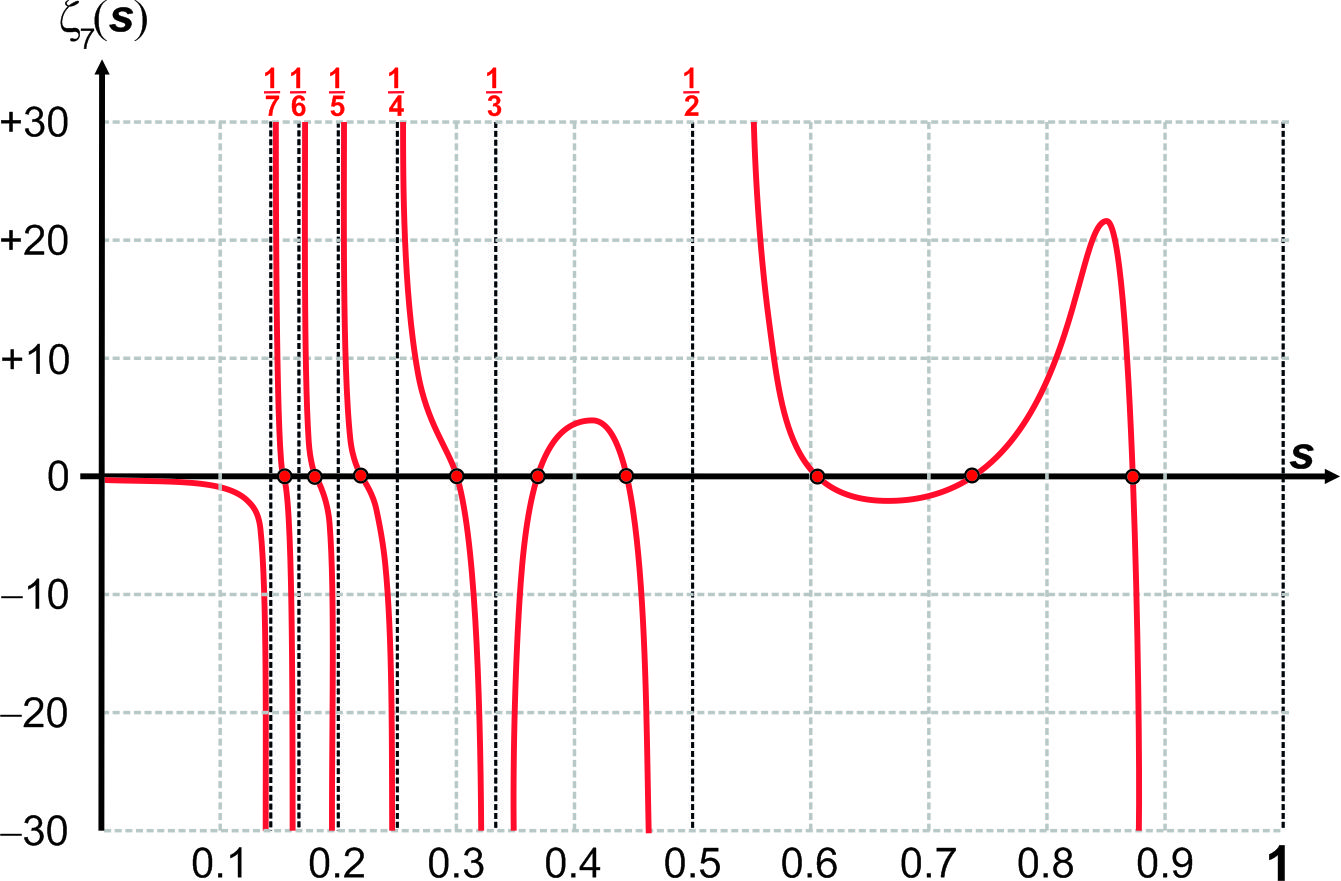}
\caption{The seven-fold zeta-function for $s\in [0,1]$}
\label{Fig9}
\end{figure}

The eight-fold zeta-function $\zeta_8(s)$ has eight asymptotes: $\Re s=1, 1/2, 1/3, 1/4, 1/5, 1/6, 1/7$ and $1/8$, and has twelve IAZs in \( s\in [0,1]\) (Fig \ref{Fig10}):

\begin{itemize}[noitemsep]
    \item One IAZ $\in (1/8,1/7)$: \(\zeta_8(0,132134...)\approx 0\)
    
    \item One IAZ $\in (1/7,1/6)$: \(\zeta_8(0,151738...)\approx 0.\)
    \item One IAZs $\in (1/6,1/5)$: \(\zeta_8(0,178822...) \approx0 \).
    \item One IAZs $\in (1/5,1/4)$: \(\zeta_8(0,218978...) \approx0 \).
    \item Two IAZs $\in (1/4,1/3)$: \(\zeta_8(0,266060...)\approx 0 \) and \(\zeta_8(0,295787..)\approx 0 \).
    \item Two IAZs $\in (1/3,1/2)$: \(\zeta_8(0,392752...)\approx 0 \) and \(\zeta_8(0,437321...)\approx 0 \).
    
    \item Four IAZs $\in (1/2,1)$: \(\zeta_8(0,538144...)\approx 0 \), \(\zeta_8(0,650658...)\approx 0 \) ,
\(\zeta_8(0,766794...)\approx 0 \) and \(\zeta_8(0,883665...)\approx 0 \).
\end{itemize} 

Also \(\zeta_8(s)\) has four minimums and one maximum for  \( s\in [0,1]\) :

\begin{itemize}[noitemsep]
    \item One minimun between vertical asymptotes $1/4$ and $1/3$: \(\zeta_8(0,277976...)\approx -2,253261...,\)
    \item One minimum between  vertical asymptotes $1/3$ and $1/2$: \(\zeta_8(0,420354...)\approx  -2,635752...,\) 
    
    \item Two minimus and one maximum between vertical asymptotes $1/2$ and 1: 
the first minimum at \( \zeta_8(0,551113...)\approx -12,188697...\), the maximum at \(\zeta_8(0,719417...)\approx  +1,334459...\), and the second minimum at \(\zeta_8(0,867892...)\approx  -45,821285....\)
\end{itemize}

\begin{figure}[h]
\centering
\includegraphics{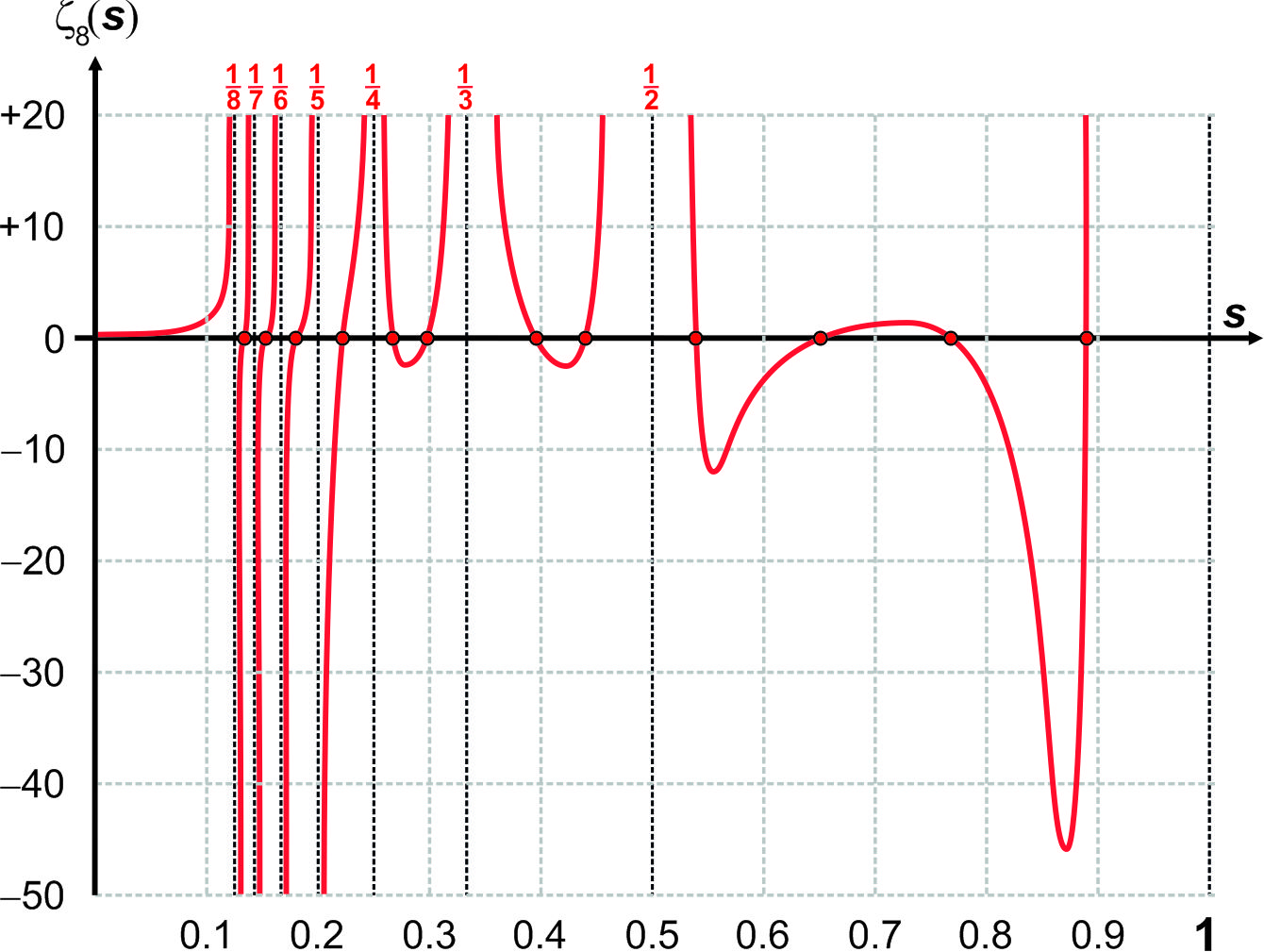}
\caption{The eight-fold zeta-function for $s\in [0,1]$}
\label{Fig10}
\end{figure}

Figure \ref{Fig11} shows the behavior of the nine-fold zeta-function, which first produces three
IAZs on the inter asymptotic interval \( s\in [1/3, 1/2]\).

%\begin{itemize}[noitemsep]
%    \item For \( s\in [^{1}/{_2},1]\),  number of IAZ=[9/2]=[4,500]=4
%    \item For \( s\in [^{1}/{_3},^{1}/{_2}]\), number of IAZ=[9/3]=[3,000]=3 (+1)
%    \item For \( s\in [^{1}/{_4},^{1}/{_3}]\), number of IAZ=[9/4]=[2,250]=2
%    \item For \( s\in [^{1}/{_5},^{1}/{_4}]\), number of IAZ=[9/5]=[1,800]=1
%    \item For \( s\in [^{1}/{_6},^{1}/{_5}]\), number of IAZ=[9/6]=[1,500]=1
%    \item For \( s\in [^{1}/{_7},^{1}/{_6}]\), number of IAZ=[9/7]=[1,285]=1
%    \item For \( s\in [^{1}/{_8},^{1}/{_7}]\), number of IAZ=[9/8]=[1,125]=1
%    \item For \( s\in [^{1}/{_9},^{1}/{_8}]\), number of IAZ=[9/9]=[1,000]=1 (+1)
%\end{itemize}

\begin{figure}[H]
\centering
\includegraphics{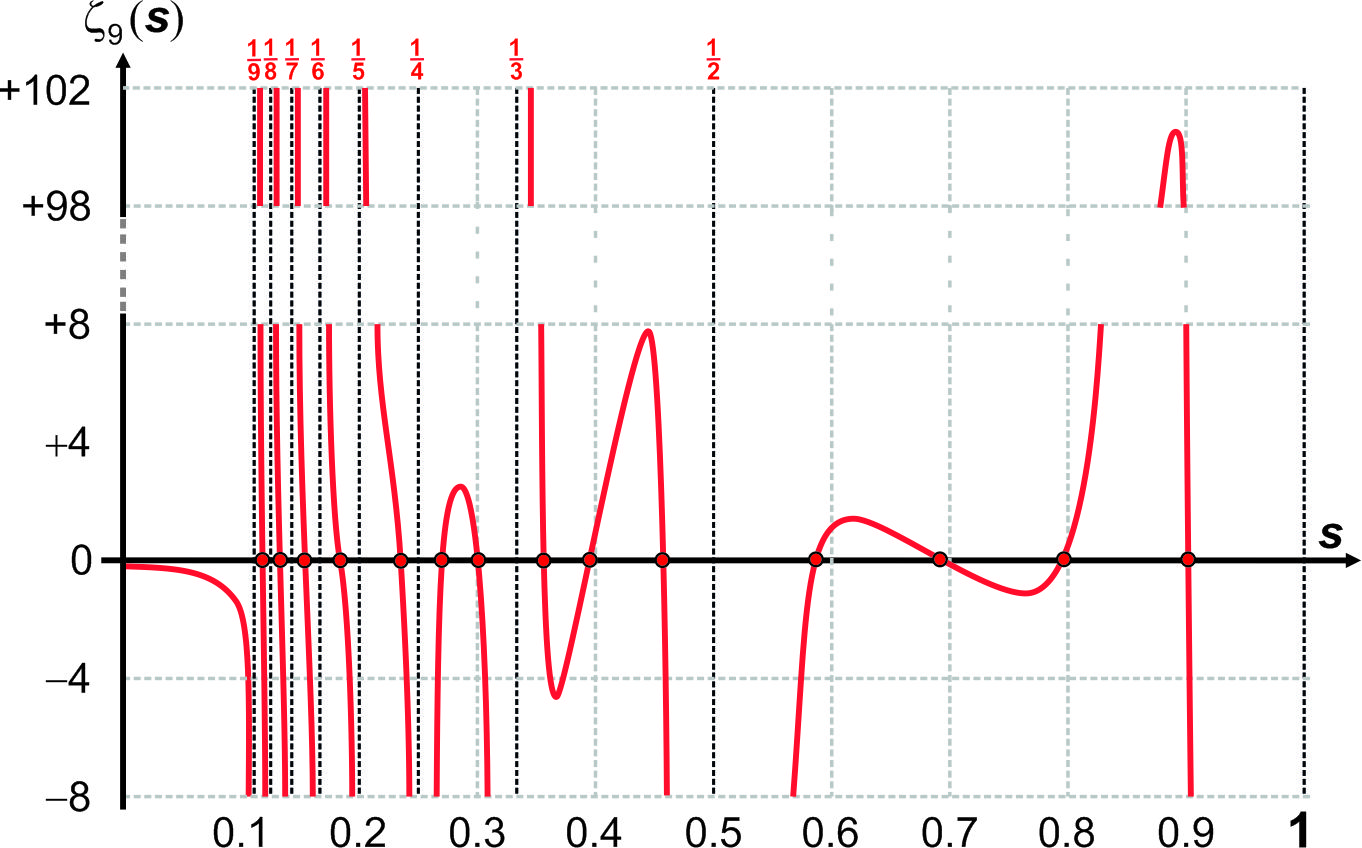}
\caption{The nine-fold zeta-function for $s\in [0,1]$}
\label{Fig11}
\end{figure}

Figure \ref{Fig12} shows the behavior of the ten-fold zeta-function, which first 
produces five IAZs at the inter-asymptotic interval \( s\in [1/2,1]\)  and
also, two IAZs at the inter asymptotic interval \( s\in [1/5,1/4]\).

%\begin{itemize}[noitemsep]
%    \item For \( s\in [^{1}/{_2},1]\) , number of IAZ=[10/2]=[5,000]= 5(+1)
%    \item For \( s\in [^{1}/{_3},^{1}/{_2}]\), number of IAZ=[10/3]=[3,333]=3 (+1)
%    \item For \( s\in [^{1}/{_4},^{1}/{_3}]\). number of IAZ=[10/4]=[2,500]=2
%    \item For \( s\in [^{1}/{_5},^{1}/{_4}]\), number of IAZ=[10/5]=[2,000]
%=2(+1)
%\item For \( s\in [^{1}/{_6},^{1}/{_5}]\), number of IAZ=[10/6]=[1,666]=1
%    \item For \( s\in [^{1}/{_7},^{1}/{_6}]\), number of IAZ=[10/7]=[1,428]=1
%    \item  For \( s\in [^{1}/{_8},^{1}/{_7}]\), number of IAZ=[10/8]=[1,250]=1
%    \item For \( s\in [^{1}/{_9},^{1}/{_8}]\), number of IAZ=[10/9]=[1,111]=1
%   \item For \( s\in [^{1}/{_{10}},^{1}/{_8}\)], number of IAZ=[10/9]=[1,000]=1 (+1)    
%    
%\end{itemize}

\begin{figure}[H]
\centering
\includegraphics{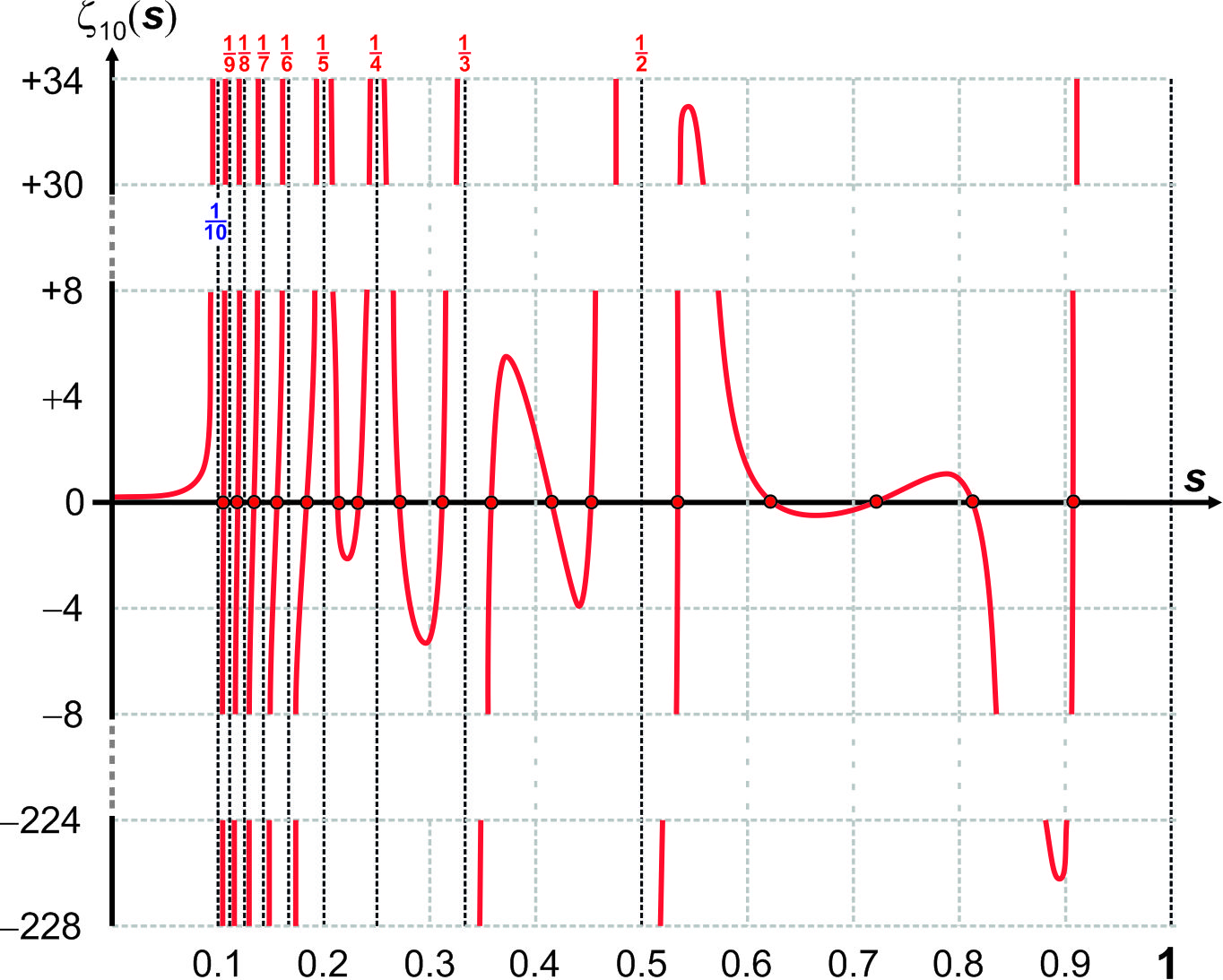}
\caption{The ten-fold zeta-function for $s\in [0,1]$}
\label{Fig12}
\end{figure}

\

\end{document}